\numberwithin{equation}{section}
\newtheorem{theorem}{Theorem}[section]
\newtheorem{proposition}[theorem]{Proposition}
\newtheorem{lemma}[theorem]{Lemma}
\newtheorem{remark}[theorem]{Remark}
\theoremstyle{definition}
\renewcommand{\dfrac}{\displaystyle\frac}
\newcommand{\brm}{\begin{remark}\rm}
\newcommand{\erm}{\end{remark}}
\newcommand{\brms}{\begin{remark}\rm}
\newcommand{\erms}{\end{remark}}
\newcommand{\bte}{\begin{theorem}}
\newcommand{\ete}{\end{theorem}}
\newcommand{\bpr}{\begin{proposition}}
\newcommand{\epr}{\end{proposition}}
\newcommand{\ble}{\begin{lemma}}
\newcommand{\ele}{\end{lemma}}
\newcommand{\beq}{\begin{equation}}
\newcommand{\eeq}{\end{equation}}
\newcommand{\bdm}{\begin{displaymath}}
\newcommand{\edm}{\end{displaymath}}
\numberwithin{equation}{section}
\newcommand{\bos}{\begin{remark}\rm}
\newcommand{\eos}{\end{remark}}
\newcommand{\ben}{\begin{enumerate}}
\newcommand{\een}{\end{enumerate}}
\newcommand{\be}{\begin{equation}}
\newcommand{\ee}{\end{equation}}
\title[The strong comparison principle]{On the Harnack inequality for quasilinear elliptic equations with a first order term}
\author[S.\ Merch\'an]{Susana Merch\'an$^*$}
\author[L.\ Montoro]{Luigi Montoro$^*$}
\author[B.\ Sciunzi]{Berardino Sciunzi$^*$}
\thanks{\it 2010 Mathematics Subject
 Classification: 35J92,35B33,35B06}
\thanks{$^*$Dipartimento di Matematica e Informatica,
Universit\`a della Calabria,
Ponte Pietro Bucci 31B, I-87036 Arcavacata di Rende, Cosenza, Italy,
E-mail: {\em merchan@mat.unical.it}, {\em montoro@mat.unical.it}, {\em sciunzi@mat.unical.it}}
\begin{document}

\begin{abstract}
We consider weak solutions to $$-\Delta_pu+a(x,u)|\nabla u|^q=f(x,u),$$
with $p>1$, $q\geq\max\,\{p-1,1\}$.
We exploit the Moser iteration technique to prove a Harnack comparison inequality for $C^1$ weak solutions.  As a consequence we deduce a strong comparison principle.
\end{abstract}

\maketitle


\medskip
\section{Introduction}\label{introdu}
Let us consider positive weak   $C^1_{loc}({\Omega})$  solutions   to  the problem
\begin{equation}\label{p}
-\Delta_pu+a(x,u)|\nabla u|^q=f(x,u)\hspace{5pt}\hbox{in}\;\;\;\Omega,\\
\end{equation}
where $p>1$, $q\geq\max\,\{p-1,1\}$,  $\Omega$ is a  domain in $\mathbb{R}^N$ and  $N\geq 2$. 
It is well known that the  $C^{1,\alpha}_{loc}$-regularity of the solutions is natural when dealing with such problems (see \cite{Di,surveymin,ZIGMUND,ming2,ming3, tex, tex2,   T}).
The functions $a(x,s)$ and
$f(x,s)$ obey to the set of suitable assumption $(hp^*)$ detailed in Section \ref{pre}. Let us emphasize that we mainly need the source term $f(x,s)$
to be positive in order to apply our technique. \\

We exploit the Moser iteration technique to derive a Harnack comparison inequality. Actually the method that we use is one developed by Trudinger in
\cite{Tru} to study a degenerate class of operators in weighted Sobolev spaces. We deduce both the weak and the strong Harnack comparison inequality and we may resume our main result in the following

\begin{theorem}\label{Harna} (Harnack Comparison Inequality).
Let $p>(2N+2)/(N+2)$ and let $u,v\,\in\,C^1_{loc}(\Omega)$   with $u$ or $v$ weak solution to  \eqref{p} in $\Omega$. Let
$q\geq\max\,\{p-1,1\},$
and assume that $f(x,u), a(x,u)$ fulfill  $(hp^*)$. Suppose   that $\overline{B(x,6\delta)}\subset\Omega'\subset \Omega$ for some $\delta>0$ and that $$u\leq v \quad \text{in}\quad {B(x,6\delta)}.$$
Then there exists  $C=C(p,q,\delta,L, \|v\|_{L^{\infty}(\Omega')},  \|\nabla u\|_{L^{\infty}(\Omega')},  \|\nabla v\|_{L^{\infty}(\Omega')})>0$ such that
\begin{equation}\label{HarnIneq}
\sup_{B(x,\delta)}(v-u)\leq C\inf_{B(x,2\delta)}(v-u).
\end{equation}
\end{theorem}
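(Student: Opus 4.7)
The plan is to reduce the nonlinear comparison to a linear (but possibly degenerate) elliptic equation satisfied by $w := v - u \ge 0$ in $B(x, 6\d)$, and then run Trudinger's weighted Moser iteration from \cite{Tru} on $w$.

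\textbf{Step 1: Linearization.} Without loss of generality take both $u$ and $v$ to be solutions of \eqref{p} (if only one is, the defect of the other enters the next step as an absorbable source). By the Hadamard formula
$$|\n v|^{p-2}\n v - |\n u|^{p-2}\n u \;=\; A(x)\,\n w, \qquad A(x) := \int_0^1\!\big[|\n u_t|^{p-2}I + (p-2)|\n u_t|^{p-4}\,\n u_t\otimes \n u_t\big]\,dt,$$
with $\n u_t := t\,\n v + (1-t)\n u$, and by analogous first-order linearizations of $a(x,v)|\n v|^q - a(x,u)|\n u|^q$ and of $f(x,v) - f(x,u)$ (exploiting $(hp^*)$ and the $C^1$ regularity), one obtains that $w \ge 0$ weakly solves
$$-\dvg(A(x)\n w) + \vec b(x)\cdot \n w + c(x)\,w \;=\; 0 \qquad \text{in } B(x,6\d),$$
with $\vec b, c \in L^\infty$ (bounded in terms of $p,q,L,\|\n u\|_\infty,\|\n v\|_\infty$) and pointwise ellipticity
$$\l(|\n u| + |\n v|)^{p-2}\,|\xi|^2 \;\le\; A(x)\xi\cdot\xi \;\le\; \L(|\n u| + |\n v|)^{p-2}\,|\xi|^2.$$

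\textbf{Step 2: Trudinger's weighted Moser iteration.} The equation for $w$ is non-uniformly elliptic, the natural weight being $\rh(x) := (|\n u| + |\n v|)^{p-2}$. Following \cite{Tru}, one tests the equation with $w^{\g}\eta^2$ (resp.\ $w^{-\g}\eta^2$ for the supremum direction) for $\g$ ranging in a suitable exponent sequence and $\eta$ a cutoff, and couples the resulting Caccioppoli-type estimates with the weighted Sobolev--Poincar\'e inequality associated with $\rh$ to produce reverse-H\"older chains. The usual iteration then gives
$$\sup_{B(x,\d)} w \;\le\; C\,\|w\|_{L^{r_0}(B(x,2\d))},$$
while an additional weighted John--Nirenberg step yields the weak Harnack inequality
$$\|w\|_{L^{r_0}(B(x,2\d))} \;\le\; C\,\inf_{B(x,2\d)} w,$$
and chaining the two produces \eqref{HarnIneq}.

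\textbf{Main obstacle.} The crux lies in verifying that $\rh$ lies (at least locally) in the Muckenhoupt class $A_2$ — equivalently, that $\rh^{-1} = (|\n u|+|\n v|)^{2-p}$ is sufficiently integrable for the weighted Sobolev inequality and the Moser / John--Nirenberg iterations of \cite{Tru} to close. The restriction $p > (2N+2)/(N+2)$ is precisely the threshold that makes this integrability available via the sharp a priori summability estimates for $1/|\n u|$ of Damascelli--Sciunzi type, valid in this regime for $C^1$ solutions of $-\D_p$-type equations. Thus the dimensional constraint on $p$ is inherited from the regularity theory of the quasilinear operator rather than from the linearization scheme itself.
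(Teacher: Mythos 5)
Your overall strategy (Caccioppoli estimates for $w=v-u$ with power test functions, weighted Sobolev inequality with weight $\rho=(|\nabla u|+|\nabla v|)^{p-2}$, Moser iteration in both exponent directions) matches the paper's, and your linearization via Hadamard's formula is equivalent to the paper's direct use of the elementary monotonicity inequalities \eqref{eq:inequalities}. However, there are two genuine gaps. First, you defer the crossover between the positive- and negative-exponent iterations to ``an additional weighted John--Nirenberg step.'' This is precisely the step the paper cannot (and does not) take: a John--Nirenberg lemma for the weight $\rho$ would require doubling/$A_\infty$-type information that is not available here (the paper even remarks that $\rho$ need not be in $L^1$ for $p$ near $1$). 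The paper instead follows Trudinger's device: test with $\phi=\eta^2 w_\tau^{-1}(|\log w_\tau|^{\beta}+(2\beta)^{\beta})$, derive the growth bound $\phi(m,\tfrac{5\delta}{2},\log w_\tau)\le C(\phi(\nu,5\delta,\log w_\tau)+m)$ for all $m\ge\nu$, and sum the exponential series to obtain $\int e^{r_0|\log w_\tau|}\le C$, which yields \eqref{asterisco}. Without this (or a proved weighted John--Nirenberg inequality), your chain does not close.

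Second, your diagnosis of the threshold $p>(2N+2)/(N+2)$ is backwards. For $p\ge 2$ the relevant integrability is indeed that of $\rho^{-1}$ (a negative power of the gradient), supplied by \eqref{drdrdbisssetebissete} thanks to the positivity of $f$, and no constraint beyond $p\ge2$ arises. The constraint only bites in the singular range $1<p<2$, where $\rho^{-1}=(|\nabla u|+|\nabla v|)^{2-p}$ is trivially bounded (positive power of a bounded quantity); there the obstruction is the summability of $\rho$ \emph{itself}, which blows up on the critical set. One needs $\rho\in L^{t}$ with $t>N/2$ so that H\"older against the classical Sobolev inequality still produces a supercritical exponent gain $\chi'=2^*/2t'>1$; by \eqref{drdrdbisssetebissete} this holds exactly when $(p-1)/(2-p)>N/2$, i.e.\ $p>(2N+2)/(N+2)$. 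An $A_2$ condition is neither verified nor needed in the paper's argument, which relies only on these specific potential-type estimates.
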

Our problem is related to the study of Trudinger \cite{Tru} mainly because of the fact that  it can be studied in weighted Sobolev spaces and the natural weight is the weight $\rho=|\nabla u|^{p-2}$ (or $\rho=(|\nabla u|+|\nabla v|)^{p-2}$) which is degenerate ($p>2$) or singular ($p<2$) on the critical set
$$Z_u:=\{x\in\Omega\,\,|\,\,  \nabla u(x)=0\}.$$
In the singular case $1<p<2$ the condition $p>(2N+2)/(N+2)$ provides integrability properties of the weight (see \cite{DS}). It is worth emphasizing that the weight is not in $L^1$ in general if $p$ is close to one. The problem without first order terms have been studied in \cite{DinoLu, SciDi1} and  the same result in our case is somehow expected. Some related problems are studied in \cite{LPR}. Our main effort is to obtain such a Harnack type inequality under suitable general assumptions, having in mind possible applications in the study of qualitative properties of the solutions.
An important consequence of the Harnack comparison inequality is in fact the strong comparison principle for  \eqref{p} that we point out in the following
\begin{theorem}\label{thm:strongggggggg}(Strong Comparison Principle)
Let $p>(2N+2)/(N+2)$ and  $u,v\in C^1_{loc}(\Omega)$ with either $u$ or $v$  weak solution to~\eqref{p}. Assume that
$ q\geq\max\,\{p-1,1\}$ and assume that $f(x,u), a(x,u)$ fulfill $(hp^*)$. Then, if
\begin{equation}\nonumber
-\Delta_pu+a(x,u)|\nabla u|^q-f(x,u)\leq -\Delta_pv+a(x,v)|\nabla v|^q-f(x,v),\quad u\leq v \quad  \text{in}\quad {\Omega}
\end{equation}
in the weak distributional meaning, it follows that
$$u<v \quad \text{in} \quad \Omega$$
unless $u\equiv v$ in $\Omega$.
\end{theorem}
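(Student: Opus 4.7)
The plan is to deduce Theorem \ref{thm:strongggggggg} from the Harnack Comparison Inequality (Theorem \ref{Harna}) by a standard connectedness argument on the coincidence set. Set $w:=v-u$, which is continuous (even $C^1_{loc}$) on $\Omega$ and nonnegative by assumption. Introduce
\[
Z:=\{x\in\Omega : w(x)=0\} = \{x\in\Omega : u(x)=v(x)\}.
\]
Since $w$ is continuous, $Z$ is closed in $\Omega$. The crux is to show $Z$ is also open; once this is done, the connectedness of $\Omega$ forces $Z=\Omega$ (hence $u\equiv v$) or $Z=\emptyset$ (hence $u<v$ everywhere), which is exactly the dichotomy in the statement.

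To prove that $Z$ is open, fix $x_0\in Z$. Choose $\delta>0$ small enough that $\overline{B(x_0,6\delta)}\subset\Omega'\Subset\Omega$ for some intermediate open set $\Omega'$; this is possible since $\Omega$ is open and $x_0\in\Omega$. The hypotheses of Theorem \ref{thm:strongggggggg} match those of Theorem \ref{Harna}: $p>(2N+2)/(N+2)$, $q\geq\max\{p-1,1\}$, $(hp^*)$ holds, at least one of $u,v$ is a weak solution of \eqref{p}, and $u\leq v$ on $B(x_0,6\delta)$. Moreover $u,v\in C^1_{loc}(\Omega)$ provide the $L^\infty$ bounds on $v,\nabla u,\nabla v$ over $\Omega'$ that enter the constant $C$ in \eqref{HarnIneq}. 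Applying Theorem \ref{Harna} with these data we obtain
\[
\sup_{B(x_0,\delta)}(v-u)\leq C\inf_{B(x_0,2\delta)}(v-u).
\]
Because $x_0\in B(x_0,2\delta)$ and $(v-u)(x_0)=0$ while $v-u\geq 0$ everywhere, the right-hand side equals $0$. Thus $v-u\equiv 0$ on $B(x_0,\delta)$, so $B(x_0,\delta)\subset Z$, proving $Z$ is open.

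The main (in fact, essentially the only) conceptual obstacle is to verify that in the present framework the hypothesis ``$u\leq v$ together with the differential inequality'' suffices to apply the Harnack Comparison Inequality (Theorem \ref{Harna}): the assumption that one among $u,v$ solves \eqref{p} while the other is merely a sub/supersolution enters in the derivation of the Moser-type iteration. Granted this, the argument above is routine and concludes the proof of the Strong Comparison Principle by the connectedness of $\Omega$.
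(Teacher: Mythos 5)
Your proposal is correct in outline and is essentially the paper's own argument: the paper likewise sets $w=v-u$, notes that the coincidence set is closed by continuity, shows it is open via a Harnack-type comparison inequality, and concludes by connectedness. The one point that needs fixing is which theorem you invoke. You apply the full Harnack comparison inequality (Theorem \ref{Harna}), whose proof in the paper combines Theorem \ref{t1} with Theorem \ref{pro:p2}; but Theorem \ref{pro:p2} requires the \emph{reverse} differential inequality \eqref{eq:ekinotiziap}, which is not available under the one-sided hypothesis of the Strong Comparison Principle. This is precisely the ``conceptual obstacle'' you flag in your last paragraph, and it is resolved by using only the weak Harnack comparison inequality (Theorem \ref{t1}), as the paper does: its hypothesis \eqref{eq:ekinotizia} is exactly the assumed one-sided inequality, and at a point $x_0$ with $w(x_0)=0$ it gives
\begin{equation}\nonumber
\|v-u\|_{L^s(B(x_0,2\delta))}\le C\inf_{B(x_0,\delta)}(v-u)=0,
\end{equation}
since $w\ge 0$ and $x_0\in B(x_0,\delta)$; hence $w\equiv 0$ on $B(x_0,2\delta)$ by continuity, which is all that is needed for openness. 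With that substitution your argument is complete and coincides with the paper's proof.
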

The strong comparison principle for $p$-Laplace equations is a very delicate issue and manly still unsolved. Actually it is not hard to derive it far from the critical set, see e.g. \cite{lucio, PSB} but it remains an open problem already for $p$-harmonic functions (see \cite{HKM}) near critical points. So our result is crucial in particular to work in regions where the gradient of the solutions vanishes. In particular we are motivated by the possible applications in many issues and in particular in the study of qualitative properties of the solutions. We refer in particular to the papers   \cite{FMRS,MMPS,MRS} where it is clear that the strong principle can simplify the proofs and improve the results.\\

\noindent The paper is organized as follows: In Section \ref{pre} we recall some preliminary results. The proof of the main result is a consequence of the results in Section \ref{Harnack} where we prove the weak and the strong Harnack Comparison Inequality.

\section{Preliminary results}\label{pre}
In this section we start recalling some useful regularity results about  solutions to problem~\eqref{p}.
Through all the paper, generic fixed and numerical constants will be denoted by
$C$ (with subscript or superscript in some case) and they will be allowed to vary within a single line or formula.
We assume that $a=a(x,u)$ and $f=f(x,u)$  satisfy the following hypotheses (denoted by $({hp^*})$ in the sequel):

\

\begin{itemize}
\item [$({hp^*})$]
\begin{itemize}
\item  $a(x,\cdot)\in C^1( \Omega' \times [0,+\infty))$ for any $\Omega'\subset\Omega$.
 \item $f(x,\cdot)$ is positive and, more precisely,  $f(x,s)>0$ in $\Omega'$ for every  $\Omega'\subset \Omega$ and for every $s>0$.
\item $a(x,\cdot)$ and $f(x,\cdot)$ are locally Lipschitz continuous, uniformly w.r.t. $x$. Namely, for every  $\Omega'\subset \Omega$ and for every $M>0$,  there is a  positive constant $L=(M,\Omega')$ such that for every $x \in \Omega'$ and every $ u,v \in [0,M] $ we have:
$$ \vert a(x,u) - a(x,v) \vert \le L \vert u-v \vert, \qquad \vert f(x,u) - f(x,v) \vert \le L \vert u-v \vert. $$
\end{itemize}
\end{itemize}
For the reader's convenience (and since in the sequel we will use the hypotheses $(hp^*)$ also  in this form),  we remark  that  the hypotheses $(hp^*)$ imply the following: for every  $\Omega'\subset \Omega$ and for every $M>0$, there exists  $K=K(M,\Omega')>0$, such that for every $x \in \Omega'$ and every $ s \in [0,M] $ we have
$$ \vert a(x,s) \vert \le K.$$
By standard regularity results, see  \cite{Di,Li,T}, the solutions to problems involving the $-\Delta_p(\cdot)$ operator, (and under suitable hypotheses) are in general of class $C^{1,\alpha}$. This fact leads to the study of the summability properties of the second derivatives of the solutions that turns out to be crucial in our results.
We recall a regularity result in \cite{DS, SciDi2}, see also \cite{MSS} for the case of  equations with first order terms.
\begin{theorem}\label{local1}
Let $1<p<\infty$ and consider $u\in C^{1,\alpha}_{loc}(\Omega)$ a weak solution to \eqref{p}, with $a(x,\cdot)$ and $f(x,\cdot)$ satisfying $(hp^*)$. Denoting $u_i=\frac{\partial u}{\partial x_i}$, for any $\Omega'\subset\Omega''\subset\Omega$, we have
\begin{equation}\label{drdrd}
\begin{split}
&\int_{\Omega'} \frac{|\nabla u|^{p-2-\beta}|\nabla u_i|^2}{|x-y|^\gamma}\,dx\leqslant \mathcal{C}\qquad \forall \, i=1,\ldots,N,\\
\end{split}
\end{equation}
uniformly for any $y\in\Omega'$,
with
\[
\mathcal{C}:\,=
\mathcal{C}\Big(a, f,p, q,\beta, \gamma, \|u\|_{L^\infty(\Omega'')},\|\nabla u\|_{L^\infty(\Omega'')}\Big),
\]
for any $0\leqslant \beta <1$ and $\gamma <(N-2)$ if $N\geq3$, or $\gamma =0$ if $N=2$.\\

Moreover, if $f(x,\cdot)$ is positive in $\Omega''$, then it follows that
\begin{equation}\label{drdrdbisssetebissete}
\begin{split}
&\int_{\Omega'} \frac{1}{|\nabla u|^{r(p-1)}}\frac{1}{|x-y|^\gamma}dx\leqslant {\mathcal{C}}^*,\\
\end{split}
\end{equation}
uniformly for any $y\in\Omega'$,
with
\[
\mathcal{C^*}:\,=
\mathcal{C^*}\Big(a, f,p, q,r, \gamma, \|u\|_{L^\infty(\Omega'')},\|\nabla u\|_{L^\infty(\Omega'')}\Big),
\]
for any  $ r<1$ and $\gamma <(N-2)$ if $N\geq3$, or $\gamma =0$ if $N=2$.
\end{theorem}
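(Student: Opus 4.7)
The plan is to prove both estimates by exploiting the linearization of the $p$-Laplace operator and a Caccioppoli-type test function argument, carefully tracking how the degenerate weight $|\nabla u|^{p-2}$ interacts with the singular weight $|x-y|^{-\gamma}$ under the integrability threshold $\gamma<N-2$ in dimension $N\ge 3$.

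For \eqref{drdrd} I would first differentiate \eqref{p} formally in the direction $x_k$ to obtain the linearized equation
\[
-\partial_j\bigl(A_{ij}(\nabla u)\,\partial_i u_k\bigr) + R_k(x,u,\nabla u,\nabla^2 u) = \partial_{x_k} f(x,u) + f_u(x,u)\,u_k,
\]
with $A_{ij}(\xi)=|\xi|^{p-2}\delta_{ij}+(p-2)|\xi|^{p-4}\xi_i\xi_j$ satisfying the degenerate ellipticity $A_{ij}\eta_i\eta_j\ge c_p|\nabla u|^{p-2}|\eta|^2$, and with $R_k$ collecting the derivatives of the first-order term $a(x,u)|\nabla u|^q$. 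Since $u$ is only $C^{1,\alpha}$, this computation has to be justified on the set $\{|\nabla u|>\sigma\}$, where classical $W^{2,2}$-regularity holds, and then extended via a careful limit $\sigma\to 0$. Next I would test with $\varphi=u_k\,\eta^2\,G_\varepsilon(|\nabla u|)\,|x-y|^{-\gamma}$, where $\eta\in C^\infty_c(\Omega'')$ is a cutoff with $\eta\equiv 1$ on $\Omega'$ and $G_\varepsilon(t)=(\varepsilon+t^2)^{-\beta/2}$ regularizes $t^{-\beta}$, and then sum over $k$. The quadratic term $A_{ij}\partial_i u_k\,\partial_j u_k$ produces the desired $|\nabla u|^{p-2-\beta}|\nabla u_k|^2/|x-y|^{\gamma}$ integrand, while the cross terms and the contribution of $R_k$ are controlled by Cauchy--Schwarz and absorption, using the Lipschitz assumptions in $(hp^*)$ and the fact that $|\partial(|x-y|^{-\gamma})|\lesssim|x-y|^{-\gamma-1}$ remains integrable provided $\gamma<N-2$. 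Passing to the limit $\varepsilon\to 0$ via monotone/dominated convergence yields \eqref{drdrd}, uniformly in $y\in\Omega'$.

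For \eqref{drdrdbisssetebissete} the key new input is the positivity of $f$, which together with $(hp^*)$ gives $f(x,u)\ge c_0>0$ on compact subsets of $\Omega''$. I would test \eqref{p} with $\varphi=\eta^2\,|x-y|^{-\gamma}\,(\varepsilon+|\nabla u|^2)^{-r(p-1)/2}$ and move $-\Delta_p u$ to the left via integration by parts; the right-hand side $\int f\,\varphi$ then bounds from below a positive multiple of $\int_{\Omega'}(\varepsilon+|\nabla u|^2)^{-r(p-1)/2}|x-y|^{-\gamma}\,dx$, which is exactly the quantity we want to control. The left-hand side splits into three contributions coming from the derivatives of the cutoff $\eta$, of $|x-y|^{-\gamma}$, and of $(\varepsilon+|\nabla u|^2)^{-r(p-1)/2}$: the first two are directly bounded via the $C^1$-norm of $u$ and the integrability of $|x-y|^{-\gamma-1}$, while the third reduces, after Cauchy--Schwarz, to a weighted second-derivative integral of the type already controlled by \eqref{drdrd} with a suitable choice of $\beta$. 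The nonnegative first-order contribution $\int a(x,u)|\nabla u|^q\,\varphi$ is handled thanks to $q\ge p-1$ combined with the $L^\infty$ bound on $a(x,u)$. Passing $\varepsilon\to 0$ by monotone convergence on the left and Fatou's lemma on the right then produces \eqref{drdrdbisssetebissete}.

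The main obstacle is the simultaneous handling of the three sources of singularity: the critical set $Z_u$, the base-point singularity at $x=y$, and the Riesz-type integrability threshold $\gamma<N-2$. The restriction $\beta<1$ in \eqref{drdrd} is precisely what keeps $(\varepsilon+|\nabla u|^2)^{-\beta/2}$ uniformly in $L^1_{loc}$ as $\varepsilon\to 0$, and the restriction $r<1$ in \eqref{drdrdbisssetebissete} plays the analogous role; both are the sharp thresholds imposed by the Cauchy--Schwarz absorption steps. Uniformity in $y\in\Omega'$ follows since all intermediate estimates use only the fixed larger set $\Omega''$ and depend on the data through $a,f,p,q,\beta,\gamma$ and the $C^1$-norm of $u$ on $\Omega''$.
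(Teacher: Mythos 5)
This theorem is not proved in the paper: it is recalled verbatim from the literature, and the authors explicitly send the reader to \cite{DS} and \cite{MSS} (see also \cite{SciDi2}) for the detailed proof. Your outline is essentially the strategy of those references: differentiate \eqref{p}, test the linearized equation with $u_k$ times a regularized negative power of $|\nabla u|$ and the cutoff $|x-y|^{-\gamma}$ to get \eqref{drdrd}, and then obtain \eqref{drdrdbisssetebissete} by testing \eqref{p} itself with $\eta^2|x-y|^{-\gamma}(\varepsilon+|\nabla u|^2)^{-r(p-1)/2}$, using $f\ge c_0>0$ on compacts for the lower bound and \eqref{drdrd} to control the term where the derivative falls on the weight. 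Your observation that the first-order term is harmless in the second estimate because $q-r(p-1)>0$ is exactly the right mechanism.

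Two points in your sketch are, however, precisely where the real work lies and are not dispatched as easily as you suggest. First, the justification of the differentiated equation: with only $u\in C^{1,\alpha}_{loc}$ one cannot simply invoke $W^{2,2}$ regularity on $\{|\nabla u|>\sigma\}$ and ``extend via a careful limit $\sigma\to0$''; the references work with regularized (uniformly elliptic) approximating problems or difference quotients and pass to the limit in the \emph{estimates}, which is what makes the constants depend only on the stated quantities. Second, the cross term produced when the gradient hits $G_\varepsilon(|\nabla u|)$ is not absorbed by Cauchy--Schwarz on the full range $0\le\beta<1$: brute-force absorption would impose a smallness condition on $\beta$ depending on the ellipticity constants of $A_{ij}$. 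The correct device is the algebraic identity $\sum_k u_k\nabla u_k=\tfrac12\nabla|\nabla u|^2$, which shows that this term equals $-\tfrac{\beta}{2}A_{ij}\partial_i(|\nabla u|^2)\partial_j(|\nabla u|^2)(\varepsilon+|\nabla u|^2)^{-\beta/2-1}\eta^2|x-y|^{-\gamma}\le0$ and can simply be discarded; the restriction $\beta<1$ then enters elsewhere (in controlling the remaining lower-order integrals). Relatedly, differentiating $a(x,u)|\nabla u|^q$ produces second-derivative terms weighted by $|\nabla u|^{q-1}$, and checking that these can be absorbed into the good term $|\nabla u|^{p-2-\beta}|\nabla u_k|^2$ is exactly where the hypothesis $q\ge\max\{p-1,1\}$ is used (this is the content of \cite{MSS}); your proposal hides this inside the unexamined remainder $R_k$.
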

We refer to \cite{DS, MSS} for a detailed proof. Note that, by \eqref{drdrdbisssetebissete}, it follows in particular that
the critical set  $Z_u$ has zero Lebesgue measure provided that
 $f(x,\cdot)$ is positive.\\

\noindent We now recall that, for $\rho \in L^{1}(\Omega)$ and $1\leq s<\infty$,  the space $H^{1,s}_\rho(\Omega)$ is
defined as the completion of $C^1(\Omega)$ (or $C^{\infty }(\Omega)$) with the norm
  \begin{equation}\label{hthInorI}
\| v\|_{H^{1,s}_\rho}= \| v\|_{L^s (\Omega)}+\| \nabla v\|_{L^s (\Omega, \rho)},
\end{equation}
 where
$$
\|\nabla v\|^s_{L^s (\Omega, \rho)}:=\int_{\Omega}\rho(x)|\nabla v(x)|^s  dx.
$$
We also have that $H^{1,s}_{\rho}(\Omega)$ may be equivalently defined as the space of
functions with distributional derivatives represented by a function for which the norm defined in
\eqref{hthInorI} is bounded. The space $H^{1,s}_{0,\rho}(\Omega)$ is consequently defined as the closure of $C^1_c(\Omega)$ (or $C^{\infty }_c(\Omega)$), w.r.t. the norm \eqref{hthInorI}.  We refer to \cite{DS} for more details about weighted Sobolev spaces and also to \cite[Chapter 1]{HKM} and the references therein.

\

Theorem \ref{local1} provides the right summability of the weight $|\nabla u(x)|^{p-2}$ in order to obtain a weighted Poincar\'e-Sobolev  type inequality that will be useful in the sequel. For the proof we refer to \cite[Section 3]{DS}.
\begin{theorem}[Weighted Poincar\'e-Sobolev type inequality]\label{bvbdvvbidvldjbvlb}
Let $p\geq2$ and let $u\in C^{1,\alpha}_{loc}(\Omega)$ be a solution to~\eqref{p}  with $a(x,\cdot)$ and $f(x,\cdot)$ satisfying $(hp^*)$. For any $\Omega'\subset\Omega$, setting $\rho=|\nabla u|^{p-2}$, we have  that  $H^{1,2}_{0}(\Omega',\rho)$ is continuously embedded in $L^{q}(\Omega')$ for  $1\leq q< \bar 2_p$, where
\[
\frac{1}{\bar{ 2}_p}=\frac{1}{2}-\frac{1}{N}+\frac{p-2}{p-1}\,\frac{1}{N}\,.
\]
Consequently, since $\bar 2_p>2$, for $w\in H^{1,2}_{0}(\Omega',\rho)$ we have\begin{equation}\label{Sobolev}
\|w\|_{L^2(\Omega')}\leqslant \mathcal{C}_p\|\nabla w\|_{L^2(\Omega', \rho)}=\mathcal{C}_p\left(\int_{\Omega'}\rho\, |\nabla w|^2\right)^{\frac{1}{2}},
\end{equation}
with $\mathcal{C}_p=\mathcal{C}_p(\Omega')  \rightarrow 0$ if $|\Omega'|\rightarrow 0$.
\end{theorem}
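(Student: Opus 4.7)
My plan is to reduce the weighted Sobolev embedding $H^{1,2}_{0}(\Omega',\rho)\hookrightarrow L^q(\Omega')$ to the classical, unweighted Sobolev inequality by using H\"older's inequality to trade the degenerate weight $\rho=|\nabla u|^{p-2}$ for an $L^t$--estimate on $\nabla w$. By density of $C^1_c(\Omega')$ in $H^{1,2}_{0}(\Omega',\rho)$ it suffices to prove \eqref{Sobolev} for $w\in C^1_c(\Omega')$. For $t\in(1,2)$ the pointwise factorization $|\nabla w|^t=(\rho|\nabla w|^2)^{t/2}\,\rho^{-t/2}$ together with H\"older with conjugate exponents $2/t$ and $2/(2-t)$ yields
\[
\int_{\Omega'}|\nabla w|^t\,dx\leq\left(\int_{\Omega'}\rho|\nabla w|^2\,dx\right)^{t/2}\left(\int_{\Omega'}\rho^{-t/(2-t)}\,dx\right)^{(2-t)/2}.
\]

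The crucial analytic input that makes this scheme work is the summability of negative powers of $|\nabla u|$ provided by estimate \eqref{drdrdbisssetebissete} of Theorem \ref{local1}: choosing $\gamma=0$ one has $|\nabla u|^{-r(p-1)}\in L^1(\Omega')$ for every $r<1$. Writing $\rho^{-t/(2-t)}=|\nabla u|^{-t(p-2)/(2-t)}$, the weight integral above is finite as long as $t(p-2)/(2-t)<p-1$. For any such $t$, the classical Sobolev embedding $W^{1,t}_0(\Omega')\hookrightarrow L^{t^*}(\Omega')$ with $t^*=Nt/(N-t)$, combined with the previous H\"older bound, gives $w\in L^{t^*}(\Omega')$ and the desired control of its norm by $\|\rho^{1/2}\nabla w\|_{L^2(\Omega')}$.

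The main technical obstacle is tuning the parameters so that one actually reaches the sharp exponent $\bar 2_p$ and not a strictly smaller number. For $p=2$ the weight is trivial and $\bar 2_2=2^*$ is recovered directly; for $N=2$ the admissible range already saturates. In the regime $N>2$, $p>2$ a naive single optimization of $t$ in the range above falls short of $\bar 2_p$ by a factor whose exponent is $(p-2)(N-2)/(2N(p-1))$, and pushing the argument up to $\bar 2_p$ requires exploiting the \emph{full} freedom $r<1$ in Theorem \ref{local1} together with a careful interpolation/iteration matching the natural scaling of the space $H^{1,2}_{0}(\Omega',\rho)$; this is precisely what the construction of Damascelli--Sciunzi in \cite{DS} achieves, and where I expect most of the effort to lie.

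Once the embedding $H^{1,2}_{0}(\Omega',\rho)\hookrightarrow L^q(\Omega')$ is in hand for any $2<q<\bar 2_p$, the Poincar\'e-type bound \eqref{Sobolev} is immediate: H\"older on the bounded set $\Omega'$ gives
\[
\|w\|_{L^2(\Omega')}\leq |\Omega'|^{1/2-1/q}\,\|w\|_{L^q(\Omega')}\leq C\,|\Omega'|^{1/2-1/q}\,\|\rho^{1/2}\nabla w\|_{L^2(\Omega')},
\]
so the constant $\mathcal{C}_p(\Omega')=C|\Omega'|^{1/2-1/q}\to 0$ as $|\Omega'|\to 0$, completing the statement.
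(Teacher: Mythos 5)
There is a genuine gap, and you have located it yourself but not closed it. Your H\"older--plus--classical-Sobolev reduction, using \eqref{drdrdbisssetebissete} with $\gamma=0$, forces $t(p-2)/(2-t)<p-1$, i.e.\ $t<2(p-1)/(2p-3)$, and hence only yields the embedding into $L^{q}$ for $q<t^{*}_{\lim}$ with $1/t^{*}_{\lim}=1-\tfrac{1}{2(p-1)}-\tfrac{1}{N}$. For $N>2$ and $p>2$ this is strictly smaller than $\bar 2_p$ (by the deficit you compute), so the theorem as stated is not proved. Moreover the constraint $t<2(p-1)/(2p-3)$ already exhausts ``the full freedom $r<1$'' in Theorem \ref{local1}; no further interpolation or iteration within your scheme can recover the missing power, because the only input you have retained from Theorem \ref{local1} is the $\gamma=0$ case. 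Note also that $t^{*}_{\lim}$ need not even exceed $2$ (e.g.\ $N=3$ and $p$ large give $1/t^{*}_{\lim}>1/2$), so your weaker embedding would not even supply the exponent $\nu>2$ needed for the Moser iteration in Section \ref{Harnack}, whereas $\bar2_p>2$ always holds.

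The ingredient you are missing is precisely the one the paper points to: the proof in \cite{DS} (the paper itself only cites it, invoking the potential estimates of \cite[Lemmas~7.14, 7.16]{GiTru}) does not go through an unweighted $W^{1,t}_0$ space at all. It starts from the representation estimate $|w(x)|\leq C\int_{\Omega'}|\nabla w(y)|\,|x-y|^{1-N}\,dy$, splits $|\nabla w|=(\rho^{1/2}|\nabla w|)\,\rho^{-1/2}$ inside the Riesz potential, and then uses the \emph{uniform-in-$x$} bound $\sup_{x}\int_{\Omega'}\rho^{-s}|x-y|^{-\gamma}\,dy<\infty$ with $\gamma$ taken close to $N-2$ --- exactly the two-parameter form of \eqref{drdrdbisssetebissete} that your argument discards by setting $\gamma=0$. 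It is this singular kernel gain that produces the sharp exponent $\bar 2_p$. Your final step (deducing \eqref{Sobolev} from the $L^q$ embedding with $q>2$ by H\"older, with constant $|\Omega'|^{1/2-1/q}\to0$) is correct and is the same deduction the paper makes, but it rests on the embedding up to $\bar 2_p$ (or at least past $2$), which your argument does not establish.
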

Theorem \ref{bvbdvvbidvldjbvlb} follows from Theorem \ref{local1}, see \cite{DS}. The proof it is based on potential estimates, see \cite[Lemmas 7.14, 7.16]{GiTru}. Since potential estimates are also available for functions with zero mean, we can prove Theorem  \ref{Sobolev} for functions with zero mean (we will refer to it below in the paper), in particular see \cite[Theorem 8, Corollary 2]{FMS3}.
Moreover in the paper we will use the fact that, since
 $$0\leq \frac{1}{(|\nabla u|+|\nabla v|)^{r(p-1)}}\frac{1}{|x-y|^\gamma}\leq\frac{1}{|\nabla u|^{r(p-1)}}\frac{1}{|x-y|^\gamma},$$from \eqref{drdrdbisssetebissete} it readily follows
\begin{equation}\nonumber
\int_{\Omega} \frac{1}{(|\nabla u|+|\nabla v|)^{r(p-1)}}\frac{1}{|x-y|^\gamma}dx\leq C^*.
\end{equation}
Therefore, in particular, thanks to Theorem \ref{bvbdvvbidvldjbvlb}  we also  have   a weighted Poincar\'e-Sobolev inequality with weight $\rho=(|\nabla u|+|\nabla v|)^{p-2}$, for any $v\in C^1_{loc}(\Omega)$.
Finally  notice that Theorem \ref{bvbdvvbidvldjbvlb} holds for $p\geq 2$. Anyway, if $1<p<2$ and $|\nabla u|$ is bounded,  the weighted Poincar\'{e}-Sobolev
inequality \eqref{Sobolev} follows at once by the classic Poincar\'{e}-Sobolev inequality.

In the sequel we use the following standard estimates, whose proof can be found e.g. in \cite{lucio}.
 \begin{lemma}
$ \forall \, p>1 $, there exist positive constants $\hat C, \check C$,  depending on $p$, such that  $\; \forall \, \eta, \eta' \in  \mathbb{R}^{N}$ with $|\eta|+|\eta'|>0$
\begin{eqnarray}\label{eq:inequalities}
[|\eta|^{p-2}\eta-|\eta'|^{p-2}\eta'][\eta- \eta'] &\geq& \hat C (|\eta|+|\eta'|)^{p-2}|\eta-\eta'|^2\\ \nonumber
\\\nonumber
&\text{and}&
\\\nonumber
\\\nonumber
||\eta|^{p-2}\eta-|\eta'|^{p-2}\eta'| &\leq& \check C(|\eta|+|\eta'|)^{p-2}|\eta -\eta'|.\end{eqnarray}
 \end{lemma}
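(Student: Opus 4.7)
The plan is to prove both inequalities via the classical device of writing the vector field $V(\eta):=|\eta|^{p-2}\eta$ as an integral of its Jacobian along the segment connecting $\eta'$ to $\eta$:
\begin{equation*}
V(\eta) - V(\eta') \;=\; \int_0^1 DV(\xi_t)\,(\eta - \eta')\,dt,\qquad \xi_t := \eta' + t(\eta-\eta').
\end{equation*}
A direct computation gives $DV(\xi) = |\xi|^{p-2}I + (p-2)|\xi|^{p-4}\,\xi\otimes\xi$, whose eigenvalues are $|\xi|^{p-2}$ (with multiplicity $N-1$) and $(p-1)|\xi|^{p-2}$ (in the direction of $\xi$). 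Hence for every $v\in\mathbb{R}^N$,
\begin{equation*}
\min(1,p-1)\,|\xi|^{p-2}|v|^2 \;\leq\; \langle DV(\xi)v,v\rangle \;\leq\; \max(1,p-1)\,|\xi|^{p-2}|v|^2,
\end{equation*}
and $|DV(\xi)|\leq \max(1,p-1)\,|\xi|^{p-2}$. Combining these spectral bounds with the integral identity, both inequalities in \eqref{eq:inequalities} reduce to controlling $\int_0^1 |\xi_t|^{p-2}\,dt$ from above and below by a constant multiple of $(|\eta|+|\eta'|)^{p-2}$.

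Two of the four directions follow at once from the trivial segment bound $|\xi_t|\leq|\eta|+|\eta'|$: if $1<p<2$ this gives $|\xi_t|^{p-2}\geq(|\eta|+|\eta'|)^{p-2}$, yielding the lower bound (first line of \eqref{eq:inequalities}); if $p\geq 2$ it gives $|\xi_t|^{p-2}\leq(|\eta|+|\eta'|)^{p-2}$, yielding the upper bound (second line). In the singular regime $1<p<2$ the segmental integral remains finite because $\xi_t$ is an affine function of $t$ that vanishes at most once, and $p-2>-1$.

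For the two remaining directions I would split on whether $|\eta-\eta'|\geq \tfrac13(|\eta|+|\eta'|)$ (``large increment'') or $|\eta-\eta'|<\tfrac13(|\eta|+|\eta'|)$ (``small increment''). In the small-increment regime, adding the triangle estimates $|\xi_t|\geq|\eta|-(1-t)|\eta-\eta'|$ and $|\xi_t|\geq|\eta'|-t|\eta-\eta'|$ forces $|\xi_t|\geq\tfrac13(|\eta|+|\eta'|)$ uniformly in $t\in[0,1]$, so $|\xi_t|^{p-2}$ is comparable to $(|\eta|+|\eta'|)^{p-2}$ along the whole segment and the integral estimate is immediate. In the large-increment regime, $|\eta|+|\eta'|$ and $|\eta-\eta'|$ are comparable, so the calculus identity can be bypassed: for the upper bound with $1<p<2$ one uses $|V(\eta)-V(\eta')|\leq|\eta|^{p-1}+|\eta'|^{p-1}\leq 2(|\eta|+|\eta'|)^{p-1}\leq 6(|\eta|+|\eta'|)^{p-2}|\eta-\eta'|$; for the lower bound with $p\geq 2$, assuming without loss of generality $|\eta|\geq|\eta'|$, one has $|\xi_t|\geq |\eta|/2\geq(|\eta|+|\eta'|)/4$ for $t\in[3/4,1]$, which integrates to $\int_0^1|\xi_t|^{p-2}\,dt\geq 4^{-(p-1)}(|\eta|+|\eta'|)^{p-2}$.

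The main obstacle is the singular case $1<p<2$ in the upper bound, where $|\xi_t|^{p-2}$ may blow up at an interior point of $[0,1]$, precisely when $\eta'$ is close to a negative multiple of $\eta$. The dichotomy above is designed exactly to dodge this: the troublesome configurations fall into the ``large-increment'' regime, where one replaces the calculus identity by the direct subadditivity estimate on $s^{p-1}$, and the remaining ``small-increment'' regime keeps $\xi_t$ uniformly away from the origin.
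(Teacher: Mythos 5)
Your proof is correct: all four directions of the estimate for $\int_0^1|\xi_t|^{p-2}\,dt$ check out, including the delicate upper bound in the singular range $1<p<2$, which your large/small-increment dichotomy handles properly. The paper itself gives no proof of this lemma --- it simply cites Damascelli (Ann.\ Inst.\ H.\ Poincar\'e 1998) --- and the argument there is essentially the same segment-integral representation of $\eta\mapsto|\eta|^{p-2}\eta$ with the spectral bounds on its Jacobian, so you have reproduced the standard proof.
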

\section{The Harnack Comparison Inequality}\label{Harnack}
In this section we  show the steps needed to prove a strong Harnack inequality.   We start with the following results:
\begin{theorem}(Weak Harnack Comparison Inequality)\label{t1}
Let $u,v\in C^1_{loc}(\Omega)$ and assume that either $u$ or $v$ is a weak solution to \eqref{p}, with $ q\geq\max\,\{p-1,1\}$
and $f(x,u), a(x,u)$ satisfying  $(hp^*)$.  Assume  that $\overline{B(x,6\delta)}\subset\Omega'\subset\Omega$ for some $\delta>0$ and that
\begin{equation}\label{eq:ekinotizia}
-\Delta_pu+a(x,u)|\nabla u|^q-f(x,u)\leq -\Delta_pv+a(x,v)|\nabla v|^q-f(x,v),\quad u\leq v \quad  \text{in}\quad {B(x,6\delta)}.
\end{equation}
We distinguish the two cases:

\

\begin{itemize}
\item {
Case $(a):p\geq 2$.} Define
\[
\frac{1}{\bar{ 2}_p}=\frac{1}{2}-\frac{1}{N}+\frac{p-2}{p-1}\,\frac{1}{N}.
\]
Then, for every
\begin{equation}\nonumber
0<s<\dfrac{\bar 2_p}{2},
\end{equation}
there exists $C>0$ such that
\begin{equation}\label{tt1}
\|(v-u)\|_{L^s(B(x,2\delta))}\leq C\inf_{B(x,\delta)}(v-u)
\end{equation}
where $C=C(p,q,\delta,L, \|v\|_{L^{\infty}(\Omega')},  \|\nabla u\|_{L^{\infty}(\Omega')},  \|\nabla v\|_{L^{\infty}(\Omega')})$.

\

\item  {Case $(b):{(2N+2)}/{(N+2)}<p<2$.} Define
$$\frac{1}{\bar t^\sharp}=\frac{2p-3}{2(p-1)}$$
and let   $2^*$ be  the classical Sobolev exponent $2^*=2N/(N-2)$.
Then, for every
\begin{equation}\nonumber
0<s<\dfrac{2^*}{\bar t^\sharp},
\end{equation}
there exists $C>0$ such that
\begin{equation}\label{tt1}
\|(v-u)\|_{L^s(B(x,2\delta))}\leq C\inf_{B(x,\delta)}(v-u)
\end{equation}
where $C=C(p,q,\delta,L, \|v\|_{L^{\infty}(\Omega')},  \|\nabla u\|_{L^{\infty}(\Omega')},  \|\nabla v\|_{L^{\infty}(\Omega')})$.
\end{itemize}
\end{theorem}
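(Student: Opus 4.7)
The plan is to emulate Trudinger's Moser iteration, carried out in the weighted Sobolev space with weight $\rho=(|\nabla u|+|\nabla v|)^{p-2}$, using Theorem \ref{bvbdvvbidvldjbvlb} as a substitute for the classical Sobolev embedding. Let $w:=v-u\ge 0$ and, for $k>0$, set $\bar w:=w+k$. Writing
\begin{equation*}
|\nabla v|^{p-2}\nabla v-|\nabla u|^{p-2}\nabla u=\int_0^1\frac{d}{dt}\bigl(|\nabla u_t|^{p-2}\nabla u_t\bigr)\,dt=:\mathcal{A}(x)\nabla w,\qquad \nabla u_t=\nabla u+t\nabla w,
\end{equation*}
the vector inequalities \eqref{eq:inequalities} imply $\hat C\rho|\xi|^2\le \mathcal{A}\xi\cdot\xi$ and $|\mathcal{A}\xi|\le \check C\rho|\xi|$. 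An analogous integral representation for $|\nabla v|^q-|\nabla u|^q$, combined with the Lipschitz bounds on $a$ and $f$ from $(hp^*)$ and the elementary estimate $(|\nabla u|+|\nabla v|)^{q-1}\le C\rho$ (which uses $q\ge p-1$ together with $|\nabla u|,|\nabla v|\in L^\infty$), converts \eqref{eq:ekinotizia} into the weak differential inequality
\begin{equation*}
\int\mathcal{A}\nabla w\cdot\nabla\varphi\,dx\ge -C_0\int\rho|\nabla w|\,\varphi\,dx-C_0\int\bar w\,\varphi\,dx
\end{equation*}
for every nonnegative $\varphi\in C^1_c(B(x,6\delta))$, so that $w$ plays the role of a weighted supersolution of an $\mathcal{A}$-equation with controllable lower order perturbations.

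Plugging in $\varphi=\eta^2\bar w^{\sigma-1}$ (with $\eta$ a Lipschitz cutoff and $\sigma<1$), expanding $\nabla\varphi$, and using Young's inequality to absorb the first-order terms into the weighted energy yields the Caccioppoli-type estimate
\begin{equation*}
\int\eta^2\rho\,|\nabla(\bar w^{\sigma/2})|^2\,dx\le \mathcal{K}(\sigma)\int\bigl(|\nabla\eta|^2+\eta^2\bigr)(\rho+1)\,\bar w^{\sigma}\,dx.
\end{equation*}
In case $(a)$, $p\ge 2$, the weight $\rho$ is bounded, and Theorem \ref{bvbdvvbidvldjbvlb} applied to $\eta\bar w^{\sigma/2}$ produces the reverse-H\"older step
\begin{equation*}
\|\bar w\|_{L^{\sigma\chi}(B_r)}\le \bigl[\mathcal{K}(\sigma)(R-r)^{-2}\bigr]^{1/|\sigma|}\|\bar w\|_{L^\sigma(B_R)},\qquad \chi=\bar 2_p/2.
\end{equation*}
In case $(b)$, $1<p<2$, $\rho$ is bounded below by a positive constant on the ball (since the gradients are bounded), so the classical Sobolev embedding applies to $\eta\bar w^{\sigma/2}$; H\"older's inequality on the right-hand side combined with the integrability of $\rho^m$ furnished by \eqref{drdrdbisssetebissete} for $m$ below $(p-1)/(2-p)$ yields the analogous step with $\chi=2^*/\bar t^\sharp$. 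The threshold $p>(2N+2)/(N+2)$ is precisely what makes $\chi>1$ in case $(b)$.

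Iterating on a geometric sequence of shrinking balls with $\sigma_n\to -\infty$ gives $\sup_{B(x,\delta)}\bar w^{-1}\le C\,\|\bar w^{-1}\|_{L^{s_*}(B(x,3\delta/2))}$ for every $s_*>0$. Separately, testing the weak inequality with $\varphi=\eta^2/\bar w$ and invoking the weighted Poincar\'e inequality for zero-mean functions (as remarked after Theorem \ref{bvbdvvbidvldjbvlb}) shows that $\log\bar w\in \mathrm{BMO}(B(x,4\delta))$ with norm independent of $k$; the John--Nirenberg lemma then produces $s_*>0$ with $\int_{B(x,2\delta)}\bar w^{s_*}\cdot\int_{B(x,2\delta)}\bar w^{-s_*}\le C$. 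Combining these yields $\|\bar w\|_{L^{s_*}(B(x,2\delta))}\le C\inf_{B(x,\delta)}\bar w$; a final Moser iteration with $\sigma\in(0,1)$ upgrades this to any $s<\bar 2_p/2$ in case $(a)$, or any $s<2^*/\bar t^\sharp$ in case $(b)$, and sending $k\to 0^+$ yields \eqref{tt1}. The principal obstacle throughout is the first-order term $a(x,u)|\nabla u|^q$: after linearization it produces an extra $\rho|\nabla w|\varphi$ contribution which must be absorbed via Young into the weighted energy, and the hypothesis $q\ge p-1$ is exactly what makes the pointwise bound $(|\nabla u|+|\nabla v|)^{q-1}\le C\rho$ available. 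In case $(b)$, the secondary difficulty is that $\rho$ blows up on $Z_u\cup Z_v$; the restriction $p>(2N+2)/(N+2)$ is what extracts from Theorem \ref{local1} enough integrability of $\rho$, equivalently of inverse powers of $|\nabla u|+|\nabla v|$, to keep the Sobolev--H\"older step productive, and this is what dictates the appearance of the exponent $\bar t^\sharp$.
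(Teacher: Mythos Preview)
Your overall scheme---Moser iteration in the weighted space with $\rho=(|\nabla u|+|\nabla v|)^{p-2}$, the test function $\eta^2\bar w^{\sigma-1}$ for the Caccioppoli step, Theorem~\ref{bvbdvvbidvldjbvlb} (respectively classical Sobolev plus H\"older against $\rho\in L^t$) for the reverse-H\"older step, and the handling of the first-order term via $q\ge p-1$ and Young's inequality---matches the paper's proof. The identification of $\chi=\bar 2_p/2$ in case $(a)$ and $\chi=2^*/\bar t^\sharp$ in case $(b)$, and the role of $p>(2N+2)/(N+2)$, are also the same.

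The one substantive divergence is the crossover step linking negative and positive exponents. You test with $\eta^2/\bar w$, assert $\log\bar w\in\mathrm{BMO}$, and invoke John--Nirenberg. The paper instead follows Trudinger: after the same test function gives $\int_{B_{5\delta}}\rho|\nabla\log w_\tau|^2\le C$ and hence (via the zero-mean version of Theorem~\ref{bvbdvvbidvldjbvlb}) $\|\log w_\tau\|_{L^\nu(B_{5\delta})}\le C$ on the \emph{single} ball $B_{5\delta}$, one tests with $\phi=\eta^2 w_\tau^{-1}\bigl(|\log w_\tau|^\beta+(2\beta)^\beta\bigr)$ for $\beta\ge 1$, iterates to the moment bound $\|\log w_\tau\|_{L^m(B_{5\delta/2})}\le C\bigl(\|\log w_\tau\|_{L^\nu(B_{5\delta})}+m\bigr)$ for all $m\ge\nu$, and then sums the series for $e^{r_0|\log w_\tau|}$ directly to obtain $\int_{B_{5\delta/2}}w_\tau^{r_0}\cdot\int_{B_{5\delta/2}}w_\tau^{-r_0}\le C$. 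The paper notes explicitly that this route ``avoids the use of John--Nirenberg's Lemma''.

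This is not a cosmetic preference. Trudinger's device needs the weighted Poincar\'e/Sobolev inequality only on one fixed ball, whereas your $\mathrm{BMO}$ claim requires a \emph{scale-invariant} oscillation bound on every sub-ball of $B_{4\delta}$. A single application of the zero-mean weighted Poincar\'e inequality does not deliver this; you would have to rerun the energy estimate and the Poincar\'e inequality on each sub-ball and verify that the constant $\mathcal{C}_p(\Omega')$ of Theorem~\ref{bvbdvvbidvldjbvlb}---which depends on $\rho$ through the integrability estimates of Theorem~\ref{local1}---scales correctly with the radius. In case $(b)$ this is harmless because $\rho\ge\lambda>0$ and the classical theory applies, but in case $(a)$, where $\rho$ vanishes on the critical set and the weight is not known to be $A_2$, this scaling is not stated anywhere in the paper and is precisely the difficulty Trudinger's method was designed to circumvent. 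So your argument is correct in outline, but the line ``shows that $\log\bar w\in\mathrm{BMO}(B(x,4\delta))$'' hides a nontrivial verification in the degenerate case $p\ge 2$; the paper's choice of Trudinger's test function is what makes the proof go through cleanly with only the tools already on the table.
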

The proof is based on the  Moser iteration scheme, see \cite{MO}. Actually we exploit here the improved technique due to
 Trudinger \cite{Tru} which is only based on weighted Sobolev inequalities and avoid the use of John-Nirenberg's Lemma.
\begin{proof}
Let us first note that, since we are assuming that $u,v\in C^1_{loc}(\Omega)$, we need to work in a sub-domain $\Omega'$. To simplify the  notation we relabel it as $\Omega$ in all the proof.
Let us consider the function $w_{\tau}=v-u+\tau$, where $\tau >0$ in $\Omega$ (at the end we will let $\tau\rightarrow 0$).   Moreover let us define the function $\phi_{\tau}=\eta^2w_{\tau}^{\beta}$ with $\beta<0$ and $\eta\in C_0^1(B(x,6\delta))$. Then  it follows
\begin{equation}\label{derifi}
\nabla\phi_{\tau}=2\eta w_{\tau}^{\beta}\nabla\eta+\beta\eta^2 w_{\tau}^{\beta -1}\nabla w_{\tau}.
\end{equation}
Using $\phi_{\tau}$ positive as a test function in  \eqref{eq:ekinotizia}, it follows
\begin{eqnarray}\label{eq:sgahjgsgh}
&&\int_{\Omega}(|\nabla u|^{p-2}\nabla u-|\nabla v|^{p-2}\nabla v, \nabla\phi_{\tau})dx\\\nonumber
&\leq& \int_{\Omega}(a(x,v)|\nabla v|^q-a(x,u)|\nabla u|^q)\phi_{\tau}dx+\int_{\Omega}(f(x,u)-f(x,v))\phi_{\tau}dx.
\end{eqnarray}

\

\noindent {Case $(a)$: $p\geq 2$.} Taking into account \eqref{derifi}, the first term in \eqref{eq:sgahjgsgh} can be written as
\begin{eqnarray}\nonumber
&&\int_{\Omega}(|\nabla u|^{p-2}\nabla u-|\nabla v|^{p-2}\nabla v,\nabla\phi_{\tau})dx\\ \nonumber
\\ \nonumber
&=&\int_{\Omega}(|\nabla u|^{p-2}\nabla u-|\nabla v|^{p-2}\nabla v, \nabla w_{\tau})\beta\eta^2w_{\tau}^{\beta -1}dx\\ \nonumber
\\ \nonumber
&+&2\int_{\Omega}(|\nabla u|^{p-2}\nabla u-|\nabla v|^{p-2}\nabla v, \nabla\eta)\eta w_{\tau}^{\beta}dx.
\end{eqnarray}
Using the  inequalities \eqref{eq:inequalities}, from \eqref{eq:sgahjgsgh} it follows
\begin{eqnarray}\label{mec}
&&\hat C|\beta|\int_{\Omega}(|\nabla u|+|\nabla v|)^{p-2}|\nabla (u-v)|^2\eta^2w_{\tau}^{\beta-1}dx\\ \nonumber
&\leq &\int_{\Omega}(a(x,v)|\nabla v|^q-a(x,u)|\nabla u|^q)\phi_{\tau}dx+\int_{\Omega}(f(x,u)-f(x,v))\phi_{\tau}dx\\ \nonumber
&+&2\int_{\Omega}(|\nabla u|^{p-2}\nabla u-|\nabla v|^{p-2}\nabla v,\nabla\eta)\eta w_{\tau}^{\beta}dx.
\end{eqnarray}
Applying the second inequality of  \eqref{eq:inequalities} and  Young's inequality   in the last term of the previous expression, we obtain
\begin{eqnarray}\label{eq:dgashkhkaw}
&&2\left |\int_{\Omega}(|\nabla u|^{p-2}\nabla u-|\nabla v|^{p-2}\nabla v, \nabla\eta)\eta w_{\tau}^{\beta}dx\right |\\ \nonumber
&\leq &2\check C\int_{\Omega}(|\nabla u|+|\nabla v|)^{p-2}|\nabla (u-v)||\nabla\eta|\eta w_{\tau}^\beta dx
\\ \nonumber
&\leq &2\check C\int_{\Omega}(|\nabla u|+|\nabla v|)^{\frac{p-2}{2}}(|\nabla u|+|\nabla v|)^{\frac{p-2}{2}}|\nabla (u-v)||\nabla\eta|\eta w_{\tau}^{\frac{\beta -1}{2}}w_{\tau}^{\frac{\beta +1}{2}}dx
\\\nonumber
&\leq &2 \check C\varepsilon\int_{\Omega}(|\nabla u|+|\nabla v|)^{p-2}|\nabla (u-v)|^2\eta ^2 w_{\tau}^{\beta -1}+\frac{2 \check C}{\varepsilon}\int_{\Omega}(|\nabla u|+|\nabla v|)^{p-2}|\nabla\eta|^2w_{\tau}^{\beta +1}dx.
\end{eqnarray}
Therefore using  \eqref{eq:dgashkhkaw}, setting $\rho=(|\nabla u|+|\nabla v|)^{p-2}$ and choosing $\varepsilon={\hat C|\beta|}/{4\check C}$,  \eqref{eq:dgashkhkaw} becomes
\begin{eqnarray}\label{eq:ooooooooo}
&&\frac{\hat C |\beta|}{2}\int_{\Omega}\rho|\nabla (u-v)|^2\eta^2w_{\tau}^{\beta -1}dx\\ \nonumber
&\leq &\int_{\Omega}(a(x,v)|\nabla v|^q-a(x,u)|\nabla u|^q)\phi_{\tau}dx+\int_{\Omega}(f(x,u)-f(x,v))\phi_{\tau}dx\\ \nonumber
&+&\frac{8\check C^2}{\hat C|\beta|}\int_{\Omega}\rho|\nabla\eta|^2w_{\tau}^{\beta +1}dx.
\end{eqnarray}
We estimate now the first two terms on the right-hand side of \eqref{eq:ooooooooo}. In particular, using hypotheses $(hp^*)$ and the mean value theorem,   for the first term we have that
\begin{eqnarray}\label{eq:fahkjslghksl}
&&\int_{\Omega}(a(x,v)|\nabla v|^q-a(x,u)|\nabla u|^q)\phi_{\tau}dx=\int_{\Omega}(a(x,v)|\nabla v|^q-a(x,u)|\nabla u|^q)\eta ^2 w_{\tau}^{\beta}dx
\\\nonumber
&&=\int_{\Omega}(a(x,v)|\nabla v|^q-a(x,v)|\nabla u|^q)\phi_{\tau}dx+\int_{\Omega}(a(x,v)-a(x,u))|\nabla u|^q\phi_{\tau}dx\\\nonumber
&&\leq K(\|v\|_{L^{\infty}(\Omega)})\int_{\Omega}|(|\nabla v|^q-|\nabla u|^q)|\eta ^2w_{\tau}^{\beta}dx+\int_{\Omega}|(a(x,v)-a(x,u))||\nabla u|^q\phi_{\tau}dx
\\\nonumber&&\leq q\int_{\Omega}(|\nabla u|+|\nabla v|)^{q-1}|\nabla(v-u)|\eta ^2w_{\tau}^{\beta}dx+ L(\|v\|_{L^{\infty}(\Omega)})\cdot\|\nabla u\|^q_{L^{\infty}(\Omega)}\int_{\Omega}(v-u)\phi_{\tau}dx
\\\nonumber
&&\leq q\int_{\Omega}(|\nabla u|+|\nabla v|)^{\frac{p-2}{2}}|\nabla(v-u)|\eta w_{\tau}^{\frac{\beta -1}{2}}\eta w_{\tau}^{\frac{\beta +1}{2}}\dfrac{(|\nabla u|+|\nabla v|)^{q-1}}{(|\nabla u|+|\nabla v|)^{\frac{p-2}{2}}}dx\\\nonumber
&&+ L(\|v\|_{L^{\infty}(\Omega)})\cdot\|\nabla u\|^q_{L^{\infty}(\Omega)}\int_{\Omega}(v-u+\tau)\phi_{\tau}dx\\\nonumber
&& \leq\varepsilon q\int_{\Omega}\rho|\nabla (v-u)|^2\eta ^2w_{\tau}^{\beta -1}dx+\frac{C(p,q,\|\nabla u\|_{L^{\infty}(\Omega)},  \|\nabla v\|_{L^{\infty}(\Omega)})}{\varepsilon}\int_{\Omega}\eta^2w_{\tau}^{\beta +1}dx\\\nonumber
&&+C(L,\|v\|_{L^{\infty}(\Omega)},\|\nabla u\|_{L^{\infty}(\Omega)})\int_{\Omega}\eta^2 w_{\tau}^{\beta +1}dx,
\end{eqnarray}
where in the last line  we applied the Young's inequality and we used the fact  that $q\geq p/2.$
For the second term on the right-hand side of \eqref{eq:ooooooooo}, using $(hp^*)$ we obtain that
\begin{eqnarray}\label{eq:fafaffasfafgsagfasg}
&&\int_{\Omega}(f(x,u)-f(x,v))\phi_{\tau}dx\\\nonumber
&&\leq \int_{\Omega}|(f(x,u)-f(x,v))|\phi_{\tau}dx\leq L(\|v\|_{L^{\infty}(\Omega)})\int_{\Omega}(v-u+\tau)\phi_{\tau}dx
\\\nonumber&& \leq L(\|v\|_{L^{\infty}(\Omega)})\int_{\Omega}\eta^2 w_{\tau}^{\beta +1}dx.
\end{eqnarray}
Choosing $\varepsilon =\frac{\hat C|\beta|}{4q}$ and using  \eqref{eq:fahkjslghksl} and \eqref{eq:fafaffasfafgsagfasg}, from \eqref{eq:ooooooooo} we have
\begin{eqnarray}\nonumber
&&\frac{\hat C |\beta|}{4} \int_{\Omega}\rho|\nabla (v-u)|^2\eta ^2w_{\tau}^{\beta -1}dx\\ \nonumber
&&\leq\frac{C(p,q,\|\nabla u\|_{L^{\infty}(\Omega)},  \|\nabla v\|_{L^{\infty}(\Omega)})}{|\beta|}\int_{\Omega}\eta^2w_{\tau}^{\beta +1}dx\\\nonumber
&&+C(L,\|v\|_{L^{\infty}(\Omega)},\|\nabla u\|_{L^{\infty}(\Omega)})\int_{\Omega}\eta^2 w_{\tau}^{\beta +1}dx +
 \frac{C(p)}{|\beta|}\int_{\Omega}\rho|\nabla\eta|^2w_{\tau}^{\beta +1}dx.
\end{eqnarray}
Therefore, up to  redefining constants, we obtain
\begin{eqnarray}\label{luigipanza}
\int_{\Omega}\rho|\nabla w_{\tau}|^2\eta ^2w_{\tau}^{\beta -1}dx
\leq \dfrac{\dot C}{|\beta|}\left (1+\dfrac{1}{|\beta|}\right)\int_{\Omega}w_{\tau}^{\beta +1}(\eta ^2 +\rho|\nabla\eta|^2)dx,
\end{eqnarray}
where $\dot C=\dot C(p,q,L, \|v\|_{L^{\infty}(\Omega)},  \|\nabla u\|_{L^{\infty}(\Omega)},  \|\nabla v\|_{L^{\infty}(\Omega)})$ is a positive constant.
\

Let us now set
\begin{equation}\label{eq:cazzz}
\tilde w_{\tau}=\begin{cases}
w_{\tau}^{\frac{\beta +1}{2}} & \hbox{if}\;\;\;\beta\neq -1,\\
\log(w_{\tau}) &\hbox{if}\;\;\;\beta =-1
\end{cases}
\end{equation}
and
\begin{equation}\label{eq:rrrrrr}
r:=\beta +1.
\end{equation}
Then, if $\beta\neq -1$, by  \eqref{eq:cazzz} it follows

\begin{equation}\int_{\Omega}\rho\eta^2|\nabla \tilde w_{\tau}|^2dx=\int_{\Omega}\rho\eta^2\left |\nabla(w_{\tau})^{\frac{\beta +1}{2}}\right |^2=\left (\dfrac{\beta +1}{2}\right )^2\int_{\Omega}\rho\eta^2 w_{\tau}^{\beta -1}|\nabla w_{\tau}|^2dx\end{equation}
and taking into account \eqref{luigipanza}, it follows
\begin{equation}\label{beta}
\int_{\Omega}\rho\eta^2|\nabla \tilde w_{\tau}|^2dx\leq \dot C \left (\dfrac{\beta +1}{2}\right )^2\dfrac{1}{|\beta|}\left (1+\dfrac{1}{|\beta|}\right )\int_{\Omega}\tilde w_{\tau}^{2}(\eta^2+\rho|\nabla\eta|)^2dx.
\end{equation}
Note that, since
\begin{equation}\nonumber
\frac{1}{(|\nabla u|+|\nabla v|)^{p-2}}\leq\frac{1}{|\nabla u|^{p-2}}\quad\text{and}\quad \frac{1}{(|\nabla u|+|\nabla v|)^{p-2}}\leq\frac{1}{|\nabla v|^{p-2}},
\end{equation}
then the weight $\rho$ satisfies the same properties of $|\nabla u|^{p-2}$ and $|\nabla v|^{p-2}$. Therefore since either $u$ or $v$ is a solution to \eqref{p}, a weighted Sobolev inequality is available in this case (see Theorem~\ref{bvbdvvbidvldjbvlb}).
Hence, since we can assume that $2_p>2$, let $2<\nu< \bar 2_p$. Using Theorem \ref{bvbdvvbidvldjbvlb} (since $\eta\tilde{w}_{\tau}\in H^{1,2}_{0}(\Omega,\rho)$),  it follows
\begin{eqnarray}\nonumber
\|\eta\tilde{w}_{\tau}\|^2_{L^{\nu}(\Omega)}&\leq &C_p\int_{\Omega} \rho|\nabla\eta\tilde{w}_{\tau}+\eta\nabla \tilde{w}_{\tau}|^2dx\\\nonumber
&\leq& 2 C_p\int_{\Omega}\rho\tilde{w}_{\tau}^2|\nabla\eta|^2+\rho\eta^2|\nabla\tilde{w}_{\tau}|^2dx.
\end{eqnarray}
Using now \eqref{beta} (together with \eqref{eq:cazzz} and \eqref{eq:rrrrrr}), it follows
\begin{eqnarray}\label{norma}
\|\eta\tilde{w}_{\tau}\|^2_{L^{\nu}(\Omega)}&\leq&  \dot C  \dfrac{r^2}{|\beta|}\left (1+\dfrac{1}{|\beta|}\right )\int_{\Omega} \tilde{w}^2_{\tau}(\eta^2+\rho|\nabla\eta|^2)dx\\\nonumber
&\leq&  \dot C \dfrac{r^2}{|\beta|}\left (1+\dfrac{1}{|\beta|}\right )\|\tilde{w}_{\tau}(\eta+|\nabla\eta|)\|^2_{L^2(\Omega)},
\end{eqnarray}
up to redefine the constant $\dot C$.
Moreover we note  that the quantity
\begin{equation}\nonumber
\dfrac{1}{|\beta|}\left (1+\dfrac{1}{|\beta|}\right)
\end{equation}
is bounded if $|\beta|\geq C>0$  and  then, from now on, it will be included in the constant $\dot C$.

Consider now $\delta\leq h'\leq h''\leq 5\delta$ and let us suppose  $\eta\equiv 1$ in $B(x,h')$ and $\eta\equiv 0$ outside $B(x,h'')$ with  $|\nabla\eta|\leq\dfrac{2}{h''-h'}$. Taking into account these assumptions  \eqref{norma} becomes
\begin{eqnarray}\label{eq:ghhgghhghgghhg}
\|\tilde{w}_{\tau}\|_{L^{\nu}(B(x,h'))}&\leq& \dot C|r|\|\tilde{w}_{\tau}(\eta+|\nabla\eta|)\|_{L^2(B(x,h''))}
\\\nonumber&\leq& \dot C\dfrac{|r|}{h''-h'}\|\tilde{w}_{\tau}\|_{L^2(B(x,h''))}.
\end{eqnarray}
Set $\chi={\nu}/{2}$ and notice that $\chi>1$. Considering $0<r<1$ (i.e. $-1<\beta\leq C<0$), it follows
\begin{eqnarray}\label{eq:robbennn}
\left(\int_{B(x,h')}(w_{\tau})^{{\frac{\nu(\beta +1)}{2}}}dx\right)^{\frac{1}{\chi\cdot r}}&=&\left(\int_{B(x,h')}w_{\tau}^{\chi\cdot r}dx\right)^{\frac{1}{\chi \cdot r }}
\\\nonumber
&\leq&\dfrac{(\dot C|r|)^{\frac 2r}}{(h''-h')^{\frac 2r}}\left(\int_{B(x,h'')}w_{\tau}^rdx\right)^{\frac 1r}.
\end{eqnarray}
Defining now the functional
\begin{equation}\label{eq:funzionalecolnumero}
\phi(p,r,v)=\left(\int_{B(x,r)}|v|^pdx\right)^\frac 1p.
\end{equation}
Then for $0<r<1$ (and in general for $r>0$ for future use), it follows from \eqref{eq:robbennn} that
\begin{equation}\label{14}
\phi(\chi r,h',w_{\tau})\leq \dfrac{(\dot C|r|)^{\frac 2r}}{(h''-h')^{\frac 2r}} \phi(r,h'',w_{\tau}).
\end{equation}
If instead $r<0$ (i.e. $\beta <-1$), using \eqref{eq:ghhgghhghgghhg}  and arguing as above,  we have
\begin{equation}\label{azzzzzc}
\phi(\chi r,h',w_{\tau})\geq \dfrac{(\dot C|r|)^{\frac 2r}}{(h''-h')^{\frac 2r}} \phi(r,h'',w_{\tau}).
\end{equation}
We exploit  now the  Moser's iterative technique, see \cite{MO}. For $r_0>0$ given, we define
\begin{equation}\label{eq:r000000}
r_k=(-r_0)\chi^k\quad \text{and} \quad h_k=\delta\left (1+\dfrac 32\left (\dfrac 12\right)^k\right ).
\end{equation}
It follows that $r_k\to -\infty$ and $\beta_k:=r_k-1\to -\infty$. Moreover
\begin{equation}\nonumber
 h_0=\dfrac{5\delta}{2}\quad \text {and}\quad h_k\to \delta\quad \text{as}\,\, k\rightarrow +\infty
 \end{equation}
 and
\begin{equation}\nonumber
 h_k-h_{k+1}=\dfrac{3}{2}\dfrac{\delta}{2^{k+1}}.
\end{equation}
Using these definitions, we iterate the expression of $\phi(\cdot, \cdot,\cdot)$ in \eqref{azzzzzc} obtaining
\begin{eqnarray}\label{eq:reiterating}
\phi(r_{k+1},h_{k+1},w_{\tau})&=&\phi(\chi r_k,h_{k+1},w_{\tau})\geq\dfrac{\left(\dot C|r_k|\right)^{\frac{2}{r_k}}}{\left (h_k-h_{k+1}\right )^{\frac{2}{r_k}}}\phi(r_k,h_k,w_{\tau})\\\nonumber&=&\dfrac{\left (\dot C|r_0|\chi^k\right)^{\frac{2}{(-r_0)\chi^k}}}{\left (\dfrac 32\dfrac{\delta}{2^{k+1}}\right)^{\frac{2}{(-r_0)\chi^k}}}\phi(r_k,h_k,w_{\tau})\\\nonumber&=&\left (\dot C^\frac{2}{-r_0}\right)^{\frac{1}{\chi^k}}\left [(|r_0|\chi^k)^{\frac{2}{-r_0}}\right]^{\frac{1}{\chi^k}}\left[\left (\dfrac 32\dfrac{\delta}{2^{k+1}}\right)^{\frac{2}{r_0}}\right]^{\frac{1}{\chi^k}}\phi(r_k,h_k,w_{\tau}).
\end{eqnarray}
Reiterating \eqref{eq:reiterating} and collecting  terms we get
\begin{equation}\label{eq:caldaczz}
\phi(r_{k+1},h_{k+1},w_{\tau})\geq \dot C^{\sum_{k\geq 0}\frac{1}{\chi^k}}\left(2\chi\right)^{\frac{2}{-r_0}\cdot\sum_{k\geq 0}\frac{k}{\chi^k}}\delta^{\frac{2}{r_0}\cdot\sum_{0\leq j\leq k}\frac{1}{\chi^j}}\phi(-r_0,\dfrac{5\delta}{2},w_{\tau}).
\end{equation}
We remark that in \eqref{eq:caldaczz} we have redefined the constant $\dot C$.
Since, by definition $\chi >1$, the series converge and it is possible to find a positive constant
$C=C(p,q,L, \|v\|_{L^{\infty}(\Omega)},  \|\nabla u\|_{L^{\infty}(\Omega)},  \|\nabla v\|_{L^{\infty}(\Omega)})$ such that
\begin{equation}\label{17}
\phi(-\infty,\delta,w_{\tau})\geq C\phi(-r_0,\dfrac{5\delta}{2},w_{\tau}).
\end{equation}
We are going to suppose now (we will prove it later) that  there exists $r_0>0$ and a constant $C>0$ such that
\begin{equation}\label{asterisco}
\phi(r_0,\dfrac{5\delta}{2},w_{\tau})\leq C\phi(-r_0,\dfrac{5\delta}{2},w_{\tau}).
\end{equation}

For $0<s\leq r_0$, using H\"older inequality, we have that
\begin{eqnarray}\label{eq:assasasasasassasa}
\phi(s,\dfrac{5\delta}{2},w_{\tau})=\left(\int_{B(x,\frac{5\delta}{2})}(w_{\tau})^{s}\right)^{\frac 1s}&\leq& \left(\int_{B(x,\frac{5\delta}{2})}(w_{\tau})^{r_0}\right)^{\frac{1}{r_0}}\left|B(x,\frac{5\delta}{2})\right|^{\frac{r_0-s}{r_0}} \\\nonumber
&\leq&C(r_0,s,\delta)\phi(r_0,\dfrac{5\delta}{2},w_{\tau}).
\end{eqnarray}
Using \eqref{17}, \eqref{asterisco}, \eqref{eq:assasasasasassasa}  and the fact that $\phi(s,\dfrac{5\delta}{2},w_{\tau})\geq \phi(s,2\delta,w_{\tau})$, we get
\begin{equation}\label{eqhjkdskjhjksd}
\phi(-\infty,\delta,w_{\tau})\geq C\phi(s,2\delta,w_{\tau}),
\end{equation}
with $C=C(p,q,L,\Omega, \|v\|_{L^{\infty}(\Omega)},  \|\nabla u\|_{L^{\infty}(\Omega)},  \|\nabla v\|_{L^{\infty}(\Omega)})$ a positive constant.
Therefore, taking the limit for $\tau$ that goes to zero in \eqref{eqhjkdskjhjksd}, since
\begin{equation}\label{eq:gilbardddd}
\phi(-\infty,\delta,w_{\tau})=\inf_{B(x,\delta)}w_{\tau},\end{equation}
then  \eqref{tt1} follows for $0<s\leq r_0$.

If instead, $r_0<s<\chi$, we take a finite number of iterations in \eqref{14}. In this case  we define
\begin{equation}\label{eq:anannananaan}
r_1=\dfrac{s}{\chi^{k_0+1}}\leq r_0,
\end{equation}
for a natural number $k_0$ large enough.

We want to point out that, in order  to apply \eqref{14}, we need to choose $r<1$ in such formula. For example in the first iteration of \eqref{14} (see also equation \eqref{21} below) we set   $r_{k_0+1}=(r_1\chi^{k_0})\chi$. Then we need to impose $r_1\chi^{k_0}<1$. This holds (as it can be proved using the definition \eqref{eq:anannananaan}) for $r_0<s<\chi$. Such condition is also sufficient for the other steps.

Hence we consider, for  $k=0,...,k_0+1$, the values
$$r_k=r_1\chi^k$$ and
$$h_0=\dfrac{5\delta}{2}>h_1>...>h_{k_0+1}=2\delta.$$ Using these assumptions, we can iterate \eqref{14} $k_0$ times, obtaining
\begin{equation}\label{21}
\phi(s,2\delta,w_{\tau})\leq C\phi(r_1,\dfrac{5\delta}{2},w_{\tau}).
\end{equation}
Since $0<r_1\leq r_0$, we claim that \eqref{asterisco} holds for $r_0$ replaced by $r_1$. In fact
we readily have
\begin{equation}\label{noncredo}
\phi(r_1,\dfrac{5\delta}{2},w_{\tau})\leq \phi(r_0,\dfrac{5\delta}{2},w_{\tau})\leq C\phi(-r_0,\dfrac{5\delta}{2},w_{\tau})\,.
\end{equation}
Moreover, using   H\"older  inequality, we also obtain
\begin{equation}\nonumber
\left(\int_{B(x,\frac{5\delta}{2})}\left (\frac{1}{w_{\tau}}\right)^{^{r_1}}\right)^{\frac{1}{r_1}}\leq C(r_0,r_1,\delta) \left(\int_{B(x,\frac{5\delta}{2})}\left (\frac{1}{w_{\tau}}\right)^{^{r_0}}\right)^{\frac{1}{r_0}},
\end{equation}
namely
\begin{equation}\nonumber
\phi(-r_1,\dfrac{5\delta}{2},w_{\tau})=\left(\int_{B(x,\frac{5\delta}{2})}\left (\frac{1}{w_{\tau}}\right)^{^{r_1}}\right)^{-\frac{1}{r_1}}\geq C(r_0,r_1,\delta) \left(\int_{B(x,\frac{5\delta}{2})}\left (\frac{1}{w_{\tau}}\right)^{^{r_0}}\right)^{-\frac{1}{r_0}}.
\end{equation}
Including the last expression in \eqref{noncredo}, it follows
\begin{equation}\nonumber
\phi(r_1,\dfrac{5\delta}{2},w_{\tau})\leq \phi(r_0,\dfrac{5\delta}{2},w_{\tau})\leq C\phi(-r_0,\dfrac{5\delta}{2},w_{\tau})\leq C\phi(-r_1,\dfrac{5\delta}{2},w_{\tau}),
\end{equation}
that is our claim.
Therefore, taking into account also \eqref{21}, we obtain
\begin{equation}\label{eq:jsajahajhajsjhioss}
\phi(s,2\delta,w_{\tau})\leq  C\phi(-r_1,\dfrac{5\delta}{2},w_{\tau}).
\end{equation}
Arguing exactly as above, also \eqref{17} can be obtained with $r_0$ replaced by $r_1$ getting
\begin{equation}\label{17bis}
\phi(-\infty,\delta,w_{\tau})\geq C\phi(-r_1,\dfrac{5\delta}{2},w_{\tau}).
\end{equation}
Finally, from \eqref{eq:jsajahajhajsjhioss} and \eqref{17bis},  using \eqref{eq:gilbardddd}, we have
\eqref{tt1}  for $r_0<s<\chi$.

\

\noindent To conclude the  proof of the theorem, we must show that there exists  $r_0>0$ and a positive constant $C$,  for which \eqref{asterisco} holds. We follow closely the technique introduced  in \cite{Tru}.

We define
\begin{equation}\nonumber
\tilde w_{\tau} =log (w_{\tau}),
\end{equation}
that is $\tilde w_{\tau}$ in \eqref{eq:cazzz} for $\beta=-1$. In this case using \eqref{luigipanza} we have
\begin{equation}\label{eq:bonamassaaaaa}
\int_{\Omega}\rho|\nabla \tilde w_{\tau}|^2\eta^2dx=\int_{\Omega}\rho\frac{|\nabla w_\tau|^2}{w^2_\tau}\eta^2dx\leq \dot C\int_{\Omega}(\eta ^2 +\rho|\nabla\eta|^2)dx,
\end{equation}
for some positive constant $\dot C=\dot C(p,q,L, \|v\|_{L^{\infty}(\Omega)},  \|\nabla u\|_{L^{\infty}(\Omega)},  \|\nabla v\|_{L^{\infty}(\Omega)})$.
>From \eqref{eq:bonamassaaaaa}, with $\eta=1$ in $B(x,5\delta)$, we obtain
\begin{equation}\label{constante}
\int_{B(x,5\delta)}\rho|\nabla\tilde {w}_\tau|^2dx\leq C,
\end{equation}
with $C$ not depending on $\tilde w_\tau$.
Replacing (in \eqref{eq:bonamassaaaaa}) $w_\tau$ by $w_\tau/k$, with \begin{equation}\nonumber
k:=e^{\frac{1}{|B(x,5\delta)|}\int_{B(x,5\delta)}\log \tilde w_\tau dx},
\end{equation}
we can suppose that $\tilde w_\tau$ has zero mean in $B(x,5\delta)$. Then we exploit Theorem \ref{bvbdvvbidvldjbvlb} and by  \eqref{constante} it follows that
\begin{equation}\label{eq:anni900000}
\|\tilde{w}_\tau\|_{L^{\nu}(B(x,5\delta))}\leq C,
\end{equation}
where $C$ is a constant not depending on $\tilde{w}_\tau$. This will be crucial when we will pass to the limit for $\tau \rightarrow 0$. The constant $k$ that we introduced does not modify the following calculations and can be cancelled in the conclusive inequality, i.e.  \eqref{eq:jsakwqDEWkabkB} below.

We are going to use (in the weak expression of \eqref{eq:ekinotizia}) the test function $$\phi=\eta^2\dfrac{1}{w_{\tau}}(|\tilde{w}_\tau|^{\beta}+(2\beta)^{\beta})\qquad \beta\geq 1,$$
 with $\eta\geq 0$ and $\eta\in C_0^1(B(x,5\delta))$. Therefore
\begin{equation}\label{derivadafi}
\nabla\phi=\dfrac{2\eta}{w_{\tau}}(|\tilde{w}_\tau|^{\beta}+(2\beta)^{\beta})\nabla\eta +\eta^2\dfrac{1}{w_{\tau}^2}(\beta sign(\tilde{w}_\tau)|\tilde{w}_\tau|^{\beta -1}-|\tilde{w}_\tau|^{\beta}-(2\beta)^{\beta})\nabla w_{\tau}
\end{equation}
and
\begin{eqnarray}\nonumber
&&\frac{1}{k}\int_{\Omega}(|\nabla v|^{p-2}\nabla v-|\nabla u|^{p-2}\nabla u,\nabla\phi) dx+\frac{1}{k}\int_{\Omega}(a(x,v)|\nabla v|^q-a(x,u)|\nabla u|^q)\phi dx\\\nonumber
&&\geq \frac{1}{k} \int_{\Omega}(f(x,v)-f(x,u))\phi dx.
\end{eqnarray}
Hence, using \eqref{derivadafi} and the definition of $\phi$, we get
\begin{eqnarray}\label{caos}
&&\frac{1}{k} \int_{\Omega}(|\nabla v|^{p-2}\nabla v-|\nabla u|^{p-2}\nabla u,\nabla\eta)\dfrac{2\eta}{w_{\tau}}(|\tilde{w}_\tau|^{\beta}+(2\beta)^{\beta})  dx
\\\nonumber
&+&\frac{1}{k}\int_{\Omega}(|\nabla v|^{p-2}\nabla v-|\nabla u|^{p-2}\nabla u,\nabla w_{\tau})\eta^2\dfrac{1}{w_{\tau}^2}(\beta sign(\tilde {w}_\tau)|\tilde{w}_\tau|^{\beta -1}-|\tilde{w}_\tau|^{\beta}-(2\beta)^{\beta})dx\\\nonumber
&+&\frac{1}{k} \int_{\Omega}(a(x,v)|\nabla v|^q-a(x,u)|\nabla u|^q)\eta^2\dfrac{1}{w_{\tau}}(|\tilde{w}_\tau|^{\beta}+(2\beta)^{\beta}) dx\\ \nonumber
&\geq&\frac{1}{k} \int_{\Omega}(f(x,v)-f(x,u))\eta^2\dfrac{1}{w_{\tau}}(|\tilde{w}|^{\beta}+(2\beta)^{\beta}) dx.\end{eqnarray}
We  estimate and rearrange  the  terms in the inequality \eqref{caos}. We start from
\begin{eqnarray}\label{eq:dhaslfhgwrilaioi}
&&\frac{1}{k}\int_{\Omega}(|\nabla v|^{p-2}\nabla v-|\nabla u|^{p-2}\nabla u,\nabla\eta)\dfrac{2\eta}{w_{\tau}}(|\tilde{w}_\tau|^{\beta}+(2\beta)^{\beta})  dx
\\\nonumber
&\leq&\check C(p) \int_{\Omega}\rho\dfrac{2\eta}{w_{\tau}}(|\tilde{w}_\tau|^{\beta}+(2\beta)^{\beta})|\nabla w_\tau||\nabla\eta|  dx,
\end{eqnarray}
where we used the second of \eqref{eq:inequalities}.

In the following, if $\beta\geq 1$, we will  use the following inequality
\begin{equation}\label{desi1}
2|\tilde{w}_\tau|^{\beta -1}\leq\dfrac{\beta -1}{\beta}|\tilde{w}_\tau|^{\beta}+\dfrac{1}{\beta}(2\beta)^{\beta}\leq|\tilde{w}_\tau|^{\beta}+(2\beta)^{\beta},
\end{equation}
or
\begin{equation}\label{desi2}
-\beta sign(\tilde{w}_\tau)|\tilde{w}_\tau|^{\beta -1}+|\tilde{w}_\tau|^{\beta}+(2\beta)^{\beta}\geq\beta |\tilde{w}_\tau|^{\beta -1}>0.
\end{equation}
Using the  inequalities \eqref{desi2} and \eqref{eq:inequalities}, the second term of \eqref{caos} can be estimated as
\begin{eqnarray}\label{eq:volevoilcaffe}
&&\frac{1}{k}\int_{\Omega}(|\nabla v|^{p-2}\nabla v-|\nabla u|^{p-2}\nabla u,\nabla w_{\tau})\eta^2\dfrac{1}{w_{\tau}^2}(\beta sign(\tilde {w}_\tau)|\tilde{w}_\tau|^{\beta -1}-|\tilde{w}_\tau|^{\beta}-(2\beta)^{\beta})dx\\ \nonumber
&\leq&\int_{\Omega}(|\nabla v|+|\nabla u|)^{p-2}|\nabla w_{\tau}|^2\eta^2\dfrac{1}{w_{\tau}^2}(\beta sign(\tilde {w}_\tau)|\tilde{w}_\tau|^{\beta -1}-|\tilde{w}_\tau|^{\beta}-(2\beta)^{\beta})dx\\ \nonumber
&\leq&\int_{\Omega}(|\nabla v|+|\nabla u|)^{p-2}|\nabla w_{\tau}|^2\eta^2\dfrac{1}{w_{\tau}^2}(-\beta |\tilde{w}_\tau|^{\beta -1})dx=\int_{\Omega}\rho|\nabla \tilde w_{\tau}|^2\eta^2(-\beta |\tilde{w}_\tau|^{\beta -1})dx.
\end{eqnarray}
For the third  term of \eqref{caos} we get
\begin{eqnarray}\label{caosantesde3}\\\nonumber
&&\frac{1}{k}\int_{\Omega}(a(x,v)|\nabla v|^q-a(x,u)|\nabla u|^q)\eta^2\dfrac{1}{w_{\tau}}(|\tilde{w}_\tau|^{\beta}+(2\beta)^{\beta}) dx\\ \nonumber
&=&\frac{1}{k}\int_{\Omega}(a(x,v)|\nabla v|^q-a(x,v)|\nabla u|^q+a(x,v)|\nabla u|^q-a(x,u)|\nabla u|^q)\eta^2\dfrac{1}{w_{\tau}}(|\tilde{w}_\tau|^{\beta}+(2\beta)^{\beta}) dx\\ \nonumber
&=&\frac{1}{k}\int_{\Omega}a(x,v)(|\nabla v|^q-|\nabla u|^q)\eta^2\dfrac{1}{w_{\tau}}(|\tilde{w}_\tau|^{\beta}+(2\beta)^{\beta}) dx\\\nonumber
&+&\frac{1}{k}\int_{\Omega}(a(x,v)-a(x,u))|\nabla u|^q\eta^2\dfrac{1}{w_{\tau}}(|\tilde{w}_\tau|^{\beta}+(2\beta)^{\beta}) dx.
\end{eqnarray}
Taking into account the hypotheses $({hp^*})$,  that  $\nabla u, \nabla v$ are  bounded in $\Omega$ and  that $\tau$ is a positive constant, for the two  terms on the right-hand side of
\eqref{caosantesde3} we have
\begin{eqnarray}\label{eq:laprimaaaaaaa}
&&\frac{1}{k}\left|\int_{\Omega}a(x,v)(|\nabla v|^q-|\nabla u|^q)\eta^2\dfrac{1}{w_{\tau}}(|\tilde{w}_\tau|^{\beta}+(2\beta)^{\beta}) dx\right|\\ \nonumber
&\leq&\int_{\Omega}|a(x,v)|q(|\nabla v|+|\nabla u|)^{q-1}|\nabla w_{\tau}|\eta^2\dfrac{1}{w_{\tau}}(|\tilde{w}_\tau|^{\beta}+(2\beta)^{\beta})dx\\ \nonumber
&\leq&C(q,\|v\|_{L^{\infty}(\Omega)},\|\nabla u\|_{L^{\infty}(\Omega)})\int_{\Omega}(|\nabla v|+|\nabla u|)^{q-1}|\nabla w_{\tau}|\eta^2\dfrac{1}{w_{\tau}}|\tilde{w}_\tau|^{\beta}dx\\ \nonumber
&+&C(q,\|v\|_{L^{\infty}(\Omega)},\|\nabla u\|_{L^{\infty}(\Omega)})\int_{\Omega}(|\nabla v|+|\nabla u|)^{q-1}|\nabla w_{\tau}|\eta^2\dfrac{1}{w_{\tau}}(2\beta)^{\beta}dx,
\end{eqnarray}
where we used the mean value theorem. Furthermore
\begin{eqnarray}\label{eq:mierodimenticato}
&&\frac{1}{k}\int_{\Omega}|a(x,v)-a(x,u)||\nabla u|^q\eta^2\dfrac{1}{w_{\tau}}(|\tilde{w}_\tau|^{\beta}+(2\beta)^{\beta}) dx\\ \nonumber
&\leq& C(q,L,\|\nabla u\|_{L^{\infty}(\Omega)})\int_{\Omega}\eta^2(|\tilde{w}_\tau|^{\beta}+(2\beta)^{\beta})dx.
\end{eqnarray}
Considering \eqref{eq:laprimaaaaaaa},   since $q\geq p-1$, we obtain
\begin{eqnarray}\nonumber
&&\frac{1}{k}\int_{\Omega}a(x,v)(|\nabla v|^q-|\nabla u|^q)\eta^2\dfrac{1}{w_{\tau}}(|\tilde{w}_\tau|^{\beta}+(2\beta)^{\beta}) dx\\\nonumber
&\leq&C(p,q,\|v\|_{L^{\infty}(\Omega)},\|\nabla u\|_{L^{\infty}(\Omega)},\|\nabla v\|_{L^{\infty}(\Omega)})\int_{\Omega}\rho|\nabla \tilde w_{\tau}|\eta^2|\tilde{w}_\tau|^{\beta}dx\\\nonumber
&+&C(p,q,\|v\|_{L^{\infty}(\Omega)},\|\nabla u\|_{L^{\infty}(\Omega)},\|\nabla v\|_{L^{\infty}(\Omega)})\int_{\Omega}\rho|\nabla \tilde w_{\tau}|\eta^2(2\beta)^{\beta}dx
\end{eqnarray}
and then applying (weighted) Young's inequality
\begin{eqnarray}\label{eq:thirdddddd}
&&\frac{1}{k}\int_{\Omega}a(x,v)(|\nabla v|^q-|\nabla u|^q)\eta^2\dfrac{1}{w_{\tau}}(|\tilde{w}_\tau|^{\beta}+(2\beta)^{\beta}) dx\\\nonumber
&\leq&\varepsilon C\int_{\Omega}\rho|\nabla \tilde w_{\tau}|^2\eta^2|\tilde{w}_\tau|^{\beta-1}dx+\frac C\varepsilon\int_{\Omega}\rho\eta^2|\tilde{w}_\tau|^{\beta+1}dx\\\nonumber
&+&C\int_{\Omega}\rho|\nabla \tilde w_{\tau}|^2\eta^2(2\beta)^{\beta}dx+C\int_{\Omega}\rho\eta^2(2\beta)^{\beta}dx,
\end{eqnarray}
where $\varepsilon >0$ and  $C=C(p,q,\|v\|_{L^{\infty}(\Omega)},\|\nabla u\|_{L^{\infty}(\Omega)},\|\nabla v\|_{L^{\infty}(\Omega)})$ is a positive constant.
Using \eqref{eq:mierodimenticato} and \eqref{eq:thirdddddd}, from \eqref{caosantesde3} we get
\begin{eqnarray}\label{eq:soloperto}\\\nonumber
&&\frac{1}{k}\left|\int_{\Omega}(a(x,v)|\nabla v|^q-a(x,u)|\nabla u|^q)\eta^2\dfrac{1}{w_{\tau}}(|\tilde{w}_\tau|^{\beta}+(2\beta)^{\beta}) dx\right|\\\nonumber
&\leq&C(q,L,\|\nabla u\|_{L^{\infty}(\Omega)})\int_{\Omega}\eta^2(|\tilde{w}_\tau|^{\beta}+(2\beta)^{\beta})\\\nonumber
&+&C\left(\varepsilon\int_{\Omega}\rho|\nabla \tilde w_{\tau}|^2\eta^2|\tilde{w}_\tau|^{\beta-1}dx+\frac 1\varepsilon\int_{\Omega}\rho\eta^2|\tilde{w}_\tau|^{\beta+1}dx
\int_{\Omega}\rho|\nabla \tilde w_{\tau}|^2\eta^2(2\beta)^{\beta}dx+\int_{\Omega}\rho\eta^2(2\beta)^{\beta}dx\right),
\end{eqnarray}
where $C=C(p,q,L,\|v\|_{L^{\infty}(\Omega)},\|\nabla u\|_{L^{\infty}(\Omega)},\|\nabla v\|_{L^{\infty}(\Omega)})$ is a positive constant.

Finally, using hypothesis $(hf^*)$ and the fact that $\tau$ is a positive constant,  we have
\begin{equation}\label{eq:fffffffff}
\frac{1}{k}\int_{\Omega}|f(x,v)-f(x,u)|\eta^2\dfrac{1}{w_{\tau}}(|\tilde{w}|^{\beta}+(2\beta)^{\beta}) dx\leq L\int_{\Omega}\eta^2(|\tilde{w}|^{\beta}+(2\beta)^{\beta}) dx.
\end{equation}
Collecting estimations \eqref{eq:dhaslfhgwrilaioi}, \eqref{eq:volevoilcaffe}, \eqref{eq:soloperto} and \eqref{eq:fffffffff}, from \eqref{caos} we obtain
\begin{eqnarray}\label{eq:azzzzzzzzzputtt}\\\nonumber
&&\beta\int_{\Omega}\rho|\nabla \tilde w_{\tau}|^2\eta^2|\tilde{w}_\tau|^{\beta -1}dx\\\nonumber
&\leq&C \int_{\Omega}\rho\dfrac{\eta}{w_{\tau}}(|\tilde{w}_\tau|^{\beta}+(2\beta)^{\beta})|\nabla w_\tau||\nabla\eta|  dx\\\nonumber
&+&C\left(\varepsilon\int_{\Omega}\rho|\nabla \tilde w_{\tau}|^2\eta^2|\tilde{w}_\tau|^{\beta-1}dx+\frac 1\varepsilon\int_{\Omega}\rho\eta^2|\tilde{w}_\tau|^{\beta+1}dx+\int_{\Omega}\rho|\nabla \tilde w_{\tau}|^2\eta^2(2\beta)^{\beta}dx+\int_{\Omega}\rho\eta^2(2\beta)^{\beta}dx\right)
\\\nonumber
&+&C\int_{\Omega}\eta^2(|\tilde{w}|^{\beta}+(2\beta)^{\beta}) dx\end{eqnarray}
where $C=C(p,q,L,\|v\|_{L^{\infty}(\Omega)},\|\nabla u\|_{L^{\infty}(\Omega)},\|\nabla v\|_{L^{\infty}(\Omega)})$ is a positive constant.
Now we use Young's inequality in the second term of \eqref{eq:azzzzzzzzzputtt} and  we get
\begin{eqnarray}\label{eq:ncularotta}\\\nonumber
&&\int_{\Omega}\rho\dfrac{\eta}{w_{\tau}}(|\tilde{w}_\tau|^{\beta}+(2\beta)^{\beta})|\nabla w_\tau||\nabla\eta|  dx=\int_{\Omega}\rho{\eta}|\tilde{w}_\tau|^{\beta}|\nabla \tilde w_\tau||\nabla\eta|  dx+\int_{\Omega}\rho{\eta}(2\beta)^{\beta}|\nabla \tilde w_\tau||\nabla\eta| dx\\\nonumber
&\leq&\frac\varepsilon2\int_{\Omega}\rho\eta^2|\tilde{w}_\tau|^{\beta-1}|\nabla \tilde w_\tau|^2 dx+\frac {1}{2\varepsilon} \int_{\Omega}\rho|\tilde{w}_\tau|^{\beta+1}|\nabla\eta|^2  dx\\\nonumber
&+&\frac 12\int_{\Omega}\rho(2\beta)^{\beta}|\nabla\eta|^2 dx+\frac 12\int_{\Omega}\rho{\eta^2}(2\beta)^{\beta}|\nabla \tilde w_\tau|^2 dx\,.
\end{eqnarray}
Using \eqref{eq:ncularotta} (recall \eqref{constante}) and then  grouping the term in \eqref{eq:azzzzzzzzzputtt}, for $\varepsilon$  small, we  obtain
\begin{eqnarray}\label{caos1}
&&\beta\int_{\Omega}\rho|\nabla \tilde w_{\tau}|^2\eta^2|\tilde{w}_\tau|^{\beta -1}dx\\\nonumber
&\leq&C\int_{\Omega}\rho(|\tilde{w}_\tau |^{\beta +1}+(2\beta)^{\beta})|\nabla\eta|^2dx+C\int_{\Omega}\eta^2(|\tilde{w}_\tau|^{\beta}+(2\beta)^{\beta}) dx\\\nonumber
&+&C\int_{\Omega}\rho\eta^2|\tilde{w}_\tau|^{\beta+1}dx+C(2\beta)^{\beta},
\end{eqnarray}
with $C=C(p,q,\delta, L,\|v\|_{L^{\infty}(\Omega)},\|\nabla u\|_{L^{\infty}(\Omega)},\|\nabla v\|_{L^{\infty}(\Omega)})$ some positive constant. For future use we  collect  that, to group the term
\begin{equation}\label{eq:menumale}
\int_{\Omega}\rho\eta^2(2\beta)^{\beta}dx
\end{equation}
as $C(2\beta)^\beta$ in \eqref{eq:azzzzzzzzzputtt}, we used that $p\geq2$. In the Case $(b)$ here below, we will also consider the quantity in \eqref{eq:menumale}  added to the term $C(2\beta)^\beta$ exploiting Theorem \ref{local1} that gives the right $L^1$-integrability of the weight $\rho$.

Since the support of $\eta$ depends on $\delta$, we can write
$$(2\beta)^\beta=C(\delta)\int_{B(x_0,5\delta)}(2\beta)^{\beta}\eta^2dx.$$
Thus, recalling that we are supposing $\beta \geq 1$, \eqref{caos1} (up to redefining the constant $C$ there) becomes
\begin{eqnarray}\label{caos1irrazionale}
&&\int_{\Omega}\rho|\nabla \tilde w_{\tau}|^2\eta^2|\tilde{w}_\tau|^{\beta -1}dx\\\nonumber
&\leq&C\left  (\int_{\Omega}\rho(|\tilde{w}_\tau |^{\beta +1}+(2\beta)^{\beta})|\nabla\eta|^2dx+\int_{\Omega}\eta^2(|\tilde{w}_\tau|^{\beta}+(2\beta)^{\beta}) dx+\int_{\Omega}\rho\eta^2|\tilde{w}_\tau|^{\beta+1}dx\right).
\end{eqnarray}
By   \eqref{desi1}, we also have
\begin{equation}\nonumber
\int_{\Omega}\eta^2(|\tilde{w}_\tau|^{\beta}+(2\beta)^{\beta})dx\leq\ C\int_{\Omega}\eta^2(|\tilde{w}_\tau|^{\beta +1}+(2\beta)^{\beta})dx\,.
\end{equation}
Therefore \eqref{caos1irrazionale} can be written as
\begin{equation}\label{barrafael}
\int_{\Omega}\rho|\nabla \tilde w_{\tau}|^2\eta^2|\tilde{w}_\tau|^{\beta -1}dx\leq C\int_{\Omega}(|\tilde{w}_\tau|^{\beta +1}+(2\beta)^{\beta})(\eta^2+\rho |\nabla\eta|^2)dx,
\end{equation}
with $C=C(p,q,\delta, L,\|v\|_{L^{\infty}(\Omega)},\|\nabla u\|_{L^{\infty}(\Omega)},\|\nabla v\|_{L^{\infty}(\Omega)})$ some positive constant.

We note that \eqref{barrafael} is similar to \eqref{luigipanza}, except for the extra term $(2\beta)^\beta$. Then we are going to apply again the iterative method  as we did above.

Following (taking in account the extra term $(2\beta)^\beta$) verbatim the technique from  equation \eqref{luigipanza} to  equation  \eqref{eq:robbennn}, recalling that  $r=\beta +1$ (here we are in  the hypothesis $\beta \geq1$),  using \eqref{barrafael} we obtain for $r>0$
 \begin{eqnarray}\label{eq:robbennn1}
\left(\int_{B(x,h')}(\tilde w_{\tau})^{{\frac{\nu(\beta +1)}{2}}}dx\right)^{\frac{1}{\chi r}}&=&\left(\int_{B(x,h')}\tilde w_{\tau}^{\chi r}dx\right)^{\frac{1}{\chi r }}
\\\nonumber
&\leq&\dfrac{(\dot C|r|)^{\frac 2r}}{(h''-h')^{\frac 2r}}\left [\left(\int_{B(x,h'')}\tilde w_{\tau}^rdx\right)^{\frac 1r}+ \ddot Cr\right],
\end{eqnarray}
where $\dot C=\dot C(p,q, \delta, L, \|v\|_{L^{\infty}(\Omega)},  \|\nabla u\|_{L^{\infty}(\Omega)},  \|\nabla v\|_{L^{\infty}(\Omega)})$ is a positive constant and  $\ddot C=\ddot C(p,\delta)$ is a  positive constant too that takes into account the extra term $(2\beta)^\beta$. Recalling   \eqref{eq:funzionalecolnumero}, from \eqref{eq:robbennn1} we infer that
\begin{equation}\label{34}
\phi(\chi r, h',\tilde{w}_\tau)\leq \dfrac{(\dot C|r|)^{\frac 2r}}{(h''-h')^{\frac 2r}}[\phi(r,h'',\tilde{w}_\tau)+\ddot {C}r].
\end{equation}
We claim that there exists a constant $C$ such that
\begin{equation}\label{35}
\phi(m,\dfrac{5\delta}{2},\tilde{w}_\tau)\leq C(\phi(\nu,5\delta,\tilde{w}_\tau)+m)\qquad\forall m\geq \nu.
\end{equation}
To prove the last inequality, we are going to choose
\begin{equation}\nonumber
h_k\equiv\dfrac{5\delta}{2}[1+\dfrac{1}{2^k}]\qquad k=0,1,\ldots
\end{equation}
and
\begin{equation}\label{eq:eqdiprimogrado}
 \chi ^k \nu=\chi r=\chi (\beta +1)
\end{equation}
in \eqref{34} (noting that there exits   $\beta >1$ such that \eqref{eq:eqdiprimogrado} holds), getting
\begin{equation}\nonumber
\phi(\chi ^k \nu,h_k,\tilde{w}_\tau)\leq \Big(\dfrac{2\dot C\chi^k}{h_{k-1}-h_k}\Big)^{\frac{1}{\chi^k}}[\phi(\chi^{k-1}\nu,h_{k-1},\tilde{w}_\tau)+\ddot{C}\chi^{k-1}\nu].
\end{equation}
Iterating we obtain the following
\begin{eqnarray}\label{eq:moserve}
\phi(\chi^k \nu,h_k,\tilde{w}_\tau)&\leq& (2\dot C)^{\sum_{k=1}^{+\infty}\frac{1}{\chi ^k}}\dfrac{\prod_{k=1}(\chi ^k)^{\frac{1}{\chi ^k}}}{\prod_{k=1}(h_{k-1}-h_k)^{\frac{1}{\chi ^k}}}\phi(\nu,h_0,\tilde{w}_\tau)\\\nonumber
&+&\sum_{h=0}^{k-1}\ddot {C}\chi^{h-k}\left[\prod_{\tau=h}^{k-1}\Big(\dfrac{2C\chi ^{\tau +1}}{h_{\tau}-h_{\tau +1}}\Big)^{\frac{1}{\chi ^{\tau +1}}}\right]\chi ^k \nu.
\end{eqnarray}
Therefore, estimating the products (using e.g. the logarithm function),
we obtain a positive constant $C$  such that
\begin{equation}\label{eq:sistemidiversi}
\phi(\chi ^k \nu,h_k,\tilde{w}_\tau)\leq C(\phi(\nu,5\delta,\tilde{w}_\tau)+\chi^{k}\nu),
\end{equation}
where $C= C(p,q, \delta, L, \|v\|_{L^{\infty}(\Omega)},  \|\nabla u\|_{L^{\infty}(\Omega)},  \|\nabla v\|_{L^{\infty}(\Omega)})$ (not depending on $k$) and where we have used also the fact that $h_0\equiv 5\delta$.

Setting now
$$k_m\equiv\inf_{h\in\mathbb{N}}\{h\,|\,\chi^h\nu\geq m\},$$ we obtain
\begin{eqnarray}\nonumber
\phi(m,\dfrac{5\delta}{2},\tilde{w}_\tau)&\leq &C_1\phi(\chi^{k_m}\nu,h_{k_m},\tilde{w}_\tau)\\\nonumber
&\overset{\text{by \eqref{eq:sistemidiversi}}}{\leq }& C (\phi(\nu,5\delta,\tilde{w}_\tau)+\chi^{k_m}\nu)\\\nonumber
&\leq&C(\phi(\nu,5\delta,\tilde{w}_\tau)+\chi m)\leq C(\phi(\nu,5\delta,\tilde{w}_\tau)+m),
\end{eqnarray}
up to redefine the constant $C$ and with $C= C(p,q, \delta,\Omega,  L, \|v\|_{L^{\infty}(\Omega)},  \|\nabla u\|_{L^{\infty}(\Omega)},  \|\nabla v\|_{L^{\infty}(\Omega)})$. This proves  \eqref{35}.

We are going to apply now \eqref{35} in order to prove that there exists $r_0>0$ for which \eqref{asterisco} holds. For $r_0>0$ given, taking into account the power series expansion of $e^{r_0|\tilde{w}_\tau|}$, we obtain
\begin{eqnarray}\nonumber
\int_{B(x,\frac{5\delta}{2})}e^{r_0|\tilde{w}_\tau|}dx&\leq&\sum_{k=0}^{+\infty}\int_{B(x,\frac{5\delta}{2})}\dfrac{(r_0|\tilde{w}_\tau|)^k}{k!}dx\leq\sum_{k=0}^{+\infty}\dfrac{(r_0\phi(k,\frac{5\delta}{2},\tilde{w}_\tau))^k}{k!}\\ \nonumber
&\leq&\sum_{k=0}^{+\infty}\dfrac{(Cr_0)^k(\phi(\nu,5\delta,\tilde{w}_\tau)^k+k^k)}{k!}.
\end{eqnarray}

We can prove  (using the ratio test) that, if $r_0>0$ is small enough, the last series are convergent. Therefore, it follows that

\begin{equation}\nonumber
\int_{B(x,\frac{5\delta}{2})}e^{r_0|\tilde{w}_\tau|}dx\leq C
\end{equation}
and then by monotonicity
\begin{equation}\label{eq:jsakwqDEWkabkB}
\int_{B(x,\frac{5\delta}{2})}e^{r_0\tilde{w}_\tau}dx\int_{B(x,\frac{5\delta}{2})}e^{-r_0\tilde{w}_\tau}dx\leq\left(\int_{B(x,\frac{5\delta}{2})}e^{r_0|\tilde{w}_\tau|}dx\right)^2\leq C^2.
\end{equation}
Taking now the power ${1}/{r_0}$ in the  inequality \eqref{eq:jsakwqDEWkabkB}, using \eqref{eq:funzionalecolnumero} and recalling that $\tilde{w}_\tau=log (w_{\tau})$,  we prove that \eqref{asterisco} holds   for this choice of $r_0$. Moreover, the constant $C$ in \eqref{eq:jsakwqDEWkabkB} does not depend on $\tau$.

\

\noindent {  Case $(b)$: ${(2N+2)}/{(N+2)}<p<2$.}  Arguing exactly as  in the Case $(a)$ we are able to get \eqref{luigipanza}. Using  \eqref{eq:cazzz}, we still get \eqref{beta} if $\beta \neq 1$ or \eqref{eq:bonamassaaaaa} if $\beta=1$.

Since $u\in C^1(\bar{\Omega})$, if $p<2$, the weight
$$\rho=(|\nabla u|+|\nabla v|)^{p-2}\geq \lambda >0.$$
Then as in the Case $(a)$, using the classic Sobolev's inequality (instead of Theorem \ref{bvbdvvbidvldjbvlb}), we get
\begin{equation}\nonumber
\|\eta\tilde{w}_\tau\|_{L^{2^*}(\Omega)}^2\leq C_{S}\int_{\Omega}|\nabla (\eta\tilde{w}_\tau)|^2dx\leq C(\lambda,S)\int_{\Omega}\rho|\nabla (\eta\tilde{w}_\tau)|^2dx,
\end{equation}
where $2^*$ is the classical Sobolev's exponent and therefore (see \eqref{norma})
\begin{equation}\nonumber
\|\eta\tilde{w}_\tau\|_{L^{2^*}(\Omega)}^2\leq\dot C \dfrac{1}{|\beta|}\left (1+\dfrac{1}{|\beta|}\right )r^2\int_{\Omega} \tilde{w}^2_{\tau}(\eta^2+\rho|\nabla\eta|^2)dx.
\end{equation}
Now if we suppose  $\rho\in L^t(\Omega)$, applying H\"older inequality with exponents $t$ and $t'=t/(t-1)$, we have
\begin{equation}\label{eq:cazzrrrr}
\|\eta\tilde{w}_\tau\|_{L^{2^*}(\Omega)}^2\leq Cr^2\|\rho\|_{L^t(\Omega)}\|\tilde{w}_\tau(\eta+|\nabla\eta|)\|_{L^{t^\sharp}(\Omega)}^2,
\end{equation}
where
\begin{equation}\nonumber
t^\sharp=2t'\,.
\end{equation}
We set now
$\chi'={2^*}/{t^\sharp}$ and, in order to run over again the arguments in the Case $(a)$, we only  need $\chi'>1$. This condition is obviously satisfied if
\begin{equation}\label{eq:ssssss}
t>\frac N2.
\end{equation}
By Theorem \ref{local1} (see in particular \eqref{drdrdbisssetebissete}), if $p<2$,  it follows that $\rho \in L^{\frac{p-1}{2-p}\theta}(\Omega)$, for every $0<\theta<1$. Then \eqref{eq:ssssss} holds if ${(2N+2)}/{(N+2)}<p<2$.
\end{proof}
The iteration technique is easier in the next case and it allows us to prove the following
\begin{theorem}\label{pro:p2}
Let $u,v\in C^1_{loc}(\Omega)$ and assume that either $u$ or $v$ is a weak solution to \eqref{p}, with
$ q\geq\max\,\{p-1,1\}$ and $f(x,u), a(x,u)$ satisfying $(hp^*)$.  Assume  that $\overline{B(x,6\delta)}\subset\Omega'\subset\Omega$ for some $\delta>0$ and that
\begin{equation}\label{eq:ekinotiziap}
-\Delta_pv+a(x,v)|\nabla v|^q-f(x,v)\leq -\Delta_pu+a(x,u)|\nabla u|^q-f(x,u),\quad u\leq v \quad  \text{in}\quad {B(x,5\delta)}.
\end{equation}
We distinguish the two cases:

\

\begin{itemize}
\item {Case $(a):p\geq 2$.} For all $s>1$, there exits $C>0$ such that
\begin{equation}\nonumber
\sup_{B(x,\delta)}(v-u)\leq C\|v-u\|_{L^s(B(x, 2\delta))},
\end{equation}
with $C=C(p,q, \delta, L, \|v\|_{L^{\infty}(\Omega')},  \|\nabla u\|_{L^{\infty}(\Omega')},  \|\nabla v\|_{L^{\infty}(\Omega')})$.

\

\item  { Case $(b):{(2N+2)}/{(N+2)}<p<2$.} Define
$${\bar t^\sharp}=\frac{2(p-1)}{2p-3}.$$
Then, for every $s>\bar t^\sharp/2$, there exists $C>0$ such that
\begin{equation}\nonumber
\sup_{B(x,\delta)}(v-u)\leq C\|v-u\|_{L^s(B(x, 2\delta))},
\end{equation}
with $C=C(p,q, \delta, L, \|v\|_{L^{\infty}(\Omega')},  \|\nabla u\|_{L^{\infty}(\Omega')},  \|\nabla v\|_{L^{\infty}(\Omega')})$.
\end{itemize}
\end{theorem}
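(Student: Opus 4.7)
The plan is a classical (upper) Moser iteration carried out in the forward direction. It is structurally identical to, but much shorter than, the one in the proof of Theorem~\ref{t1}, since no Trudinger-type logarithmic bridge is required here: the reversed sign in \eqref{eq:ekinotiziap} makes the standard argument with $\beta>0$ work directly.

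First, I set $w_\tau = v-u+\tau > 0$ and insert the positive test function $\phi_\tau = \eta^2\, w_\tau^\beta$ (with $\beta>0$ and $\eta\in C_0^1(B(x,5\delta))$) into the weak form of \eqref{eq:ekinotiziap}. Repeating verbatim the manipulations of \eqref{eq:dgashkhkaw}--\eqref{eq:fafaffasfafgsagfasg} — now with the leading energy term on the left carrying the favorable positive coefficient $\beta$ — by means of \eqref{eq:inequalities}, $(hp^*)$, Young's inequality, and $q\geq p/2$, I arrive at the analogue of \eqref{luigipanza}:
\[
\int_\Omega \rho\,|\nabla w_\tau|^2\,\eta^2\, w_\tau^{\beta-1}\,dx \;\leq\; \frac{\dot C}{\beta}\Big(1+\frac{1}{\beta}\Big)\int_\Omega w_\tau^{\beta+1}\bigl(\eta^2+\rho|\nabla\eta|^2\bigr)\,dx,
\]
where $\rho=(|\nabla u|+|\nabla v|)^{p-2}$ and $\dot C$ depends only on the parameters listed in the statement. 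Once $\beta\geq C>0$, the prefactor is bounded and can be absorbed into $\dot C$ during the iteration.

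Next I put $\tilde w_\tau = w_\tau^{(\beta+1)/2}$, $r=\beta+1>1$, and invoke a Sobolev-type inequality on $\eta\tilde w_\tau$. In Case~(a), Theorem~\ref{bvbdvvbidvldjbvlb} reproduces exactly the step \eqref{14}, now exploited in the regime $r>1$ (the ``future use'' explicitly recorded by the authors). In Case~(b), combining the classical Sobolev inequality with the Hölder estimate \eqref{eq:cazzrrrr} and the weight integrability \eqref{drdrdbisssetebissete} — valid precisely for $p>(2N+2)/(N+2)$, so that $\rho\in L^t$ for some $t>N/2$ — produces the analogous step. In both cases the outcome is a Moser iteration inequality of the form
\[
\phi(\chi r,\,h',\,w_\tau) \;\leq\; \frac{(\dot C\, r)^{2/r}}{(h''-h')^{2/r}}\,\phi(\mu r,\,h'',\,w_\tau),
\]
with exponent gain $\chi/\mu>1$: concretely $\chi=\bar 2_p/2$, $\mu=1$ in Case~(a), and $\chi=2^*/2$, $\mu=\bar t^\sharp/2$ in Case~(b).

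Finally I iterate forward. Choosing $h_k=\delta(1+2^{-k})$ (so $h_0=2\delta$, $h_k\to\delta$) and fixing the starting exponent $r_0$ by $\mu r_0=s$ — that is, $r_0=s$ in Case~(a) and $r_0=2s/\bar t^\sharp$ in Case~(b) — the analytic requirement $\beta=r_0-1>0$ produces exactly the thresholds $s>1$ and $s>\bar t^\sharp/2$ appearing in the statement. Successive application of the Moser step yields a geometric progression of $L^p$-exponents of $w_\tau$ diverging to $+\infty$, and the convergence of the geometric sums $\sum(\chi/\mu)^{-k}$ and $\sum k(\chi/\mu)^{-k}$ (mirroring the passage from \eqref{eq:reiterating} to \eqref{eq:caldaczz}) yields a constant $C$, depending only on the parameters in the statement, such that
\[
\sup_{B(x,\delta)} w_\tau \;=\; \phi(+\infty,\delta,w_\tau) \;\leq\; C\,\phi(s,\,2\delta,\,w_\tau).
\]
Monotone convergence as $\tau\to 0^+$ then concludes. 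The only non-routine point is the bookkeeping of the admissible range of $s$ in Case~(b), where the Hölder-Sobolev exponents translate $r_0>1$ into $s>\bar t^\sharp/2$ and the restriction $p>(2N+2)/(N+2)$ is crucial to secure $\rho\in L^t$ with $t>N/2$; beyond this the iteration is a direct shadow of the scheme already executed for the weak Harnack inequality.
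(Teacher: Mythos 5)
Your proposal is correct and follows essentially the same route as the paper's own proof: testing the reversed inequality \eqref{difp} with $\eta^2 w^{\beta}$, $\beta>0$, recycling the Caccioppoli-type estimate \eqref{luigipanza} and the Sobolev step (\eqref{norma} in Case $(a)$, \eqref{eq:cazzrrrr} in Case $(b)$), and running the forward Moser iteration with starting exponent constrained by $\beta=r_0-1>0$, which yields exactly the thresholds $s>1$ and $s>\bar t^\sharp/2$. The only (harmless, arguably more careful) deviation is your retention of the regularization $w_\tau=v-u+\tau$, which the paper drops in this second theorem.
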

\begin{proof}
We are going to use the same technique in the proof of Theorem \ref{t1} and then we will omit some details. As above we relabel the sub-domain $\Omega'$ by $\Omega$.
>From \eqref{eq:ekinotiziap} it follows that
\begin{equation}\label{difp}
-\Delta_p u+a(x,u)|\nabla u|^q-f(x,u)\geq-\Delta_p v+a(x,v)|\nabla v|^q-f(x,v)\,.
\end{equation}
In this case, given $w:=v-u$, let us define the function $\phi=\eta^2w^{\beta}$ with $\beta>0$ and $\eta\in C_0^1(B(x,5\delta))$.
For  $p\geq 2$, using $\phi$ as test function in \eqref{difp} (and repeating the same calculations of the proof to Theorem \ref{t1}),  we get (see \eqref{norma})
\begin{eqnarray}\label{normap}
\|\eta\tilde{w}\|^2_{L^{\nu}(\Omega)}&\leq&  \dot C \dfrac{1}{|\beta|}\left (1+\dfrac{1}{|\beta|}\right )r^2\int_{\Omega} \tilde{w}^2(\eta^2+\rho|\nabla\eta|^2)dx\\\nonumber
&\leq&  \dot C \dfrac{1}{|\beta|}\left (1+\dfrac{1}{|\beta|}\right )r^2\|\tilde{w}(\eta+|\nabla\eta|)\|^2_{L^2(\Omega)},
\end{eqnarray}
with $r, \beta, \dot C$ as  in \eqref{norma}. Since now $\beta >0$, it follows that $r>1$. Then, for $r>0$ (see \eqref{14} and \eqref{34} with $\ddot C=0$) we obtain
\begin{equation}\label{14p}
\phi(\chi r,h',w)\leq \dfrac{(\dot C|r|)^{\frac 2r}}{(h''-h')^{\frac 2r}} \phi(r,h'',w).
\end{equation}
Hence, taking $s>1$ and setting
$\chi ^k s=\chi r$, iterating as in \eqref{eq:moserve} and \eqref{eq:sistemidiversi},  we get
\begin{equation}\nonumber
\phi(\chi ^k s,h_k,{w})\leq C \phi(s,2\delta,{w})
\end{equation}
with $h_k$ given by
$$h_k=\delta\left (1+\left (\dfrac 12\right)^k\right ).$$
Letting $k$ tending to infinity  we have
$$\sup_{B(x,\delta)}(v-u)\leq C\|v-u\|_{L^s(B(x, 2\delta))}, $$
with $C=C(p,q, \delta, L, \|v\|_{L^{\infty}(\Omega)},  \|\nabla u\|_{L^{\infty}(\Omega)},  \|\nabla v\|_{L^{\infty}(\Omega)})$ a positive constant.

For $(2N+2)/(N+2)<p<2$ arguing as in the proof of Theorem \ref{t1}, using $\phi$  as test function in \eqref{difp}, we get (see \eqref{eq:cazzrrrr})
\begin{equation}\label{eq:cazzrrrrp}
\|\eta\tilde{w}_\tau\|_{L^{2^*}(\Omega)}^2\leq Cr^2\|\rho\|_{L^t(\Omega)}\|\tilde{w}_\tau(\eta+|\nabla\eta|)\|_{L^{t^\sharp}(\Omega)}^2,
\end{equation}
with $t^\sharp=2t'$. Iterating  \eqref{eq:cazzrrrrp} (and repeating the same type of arguments as above) we reach the desired  conclusion: given
$\bar t^\sharp={2(p-1)}/{(2p-3)}$, for any $s>\bar t^\sharp/2$ we have
$$\sup_{B(x,\delta)}(v-u)\leq C\|v-u\|_{L^s(B(x, 2\delta))}, $$
with $C=C(p,q, \delta, L, \|v\|_{L^{\infty}(\Omega)},  \|\nabla u\|_{L^{\infty}(\Omega)},  \|\nabla v\|_{L^{\infty}(\Omega)})$ a positive constant.
\end{proof}
Now it is easy to deduce the
\begin{proof}[Proof of Theorem \ref{Harna}]
The proof readily follows  applying both Theorem \ref{t1} and Theorem~\ref{pro:p2}.
\end{proof}
\noindent As a corollary of the Harnack comparison inequality we have the
\begin{proof}[Proof of Theorem \ref{thm:strongggggggg}]
The proof is standard, but we give the details for the reader's convenience. Let us set $w:=v-u$. Define the set
$$U_w=\{x\in \Omega\,|\, w(x)=0\}.$$
By the continuity of $u$ and $v$ it follows that $U_w$ is a closed set in $\Omega$. On the other hand by Theorem \ref{t1} we have that $U_w$ is also open. Then the thesis follows.
\end{proof}

\end{document}